\documentclass{amsart}
\usepackage{amsmath}
\usepackage{amssymb}
\usepackage{amsthm}
\usepackage[msc-links,abbrev]{amsrefs}
\usepackage[noabbrev,capitalize]{cleveref}

\DeclareMathOperator{\tr}{tr}

\def\sideremark#1{\ifvmode\leavevmode\fi\vadjust{\vbox to0pt{\vss
 \hbox to 0pt{\hskip\hsize\hskip1em
 \vbox{\hsize3cm\tiny\raggedright\pretolerance10000
 \noindent #1\hfill}\hss}\vbox to8pt{\vfil}\vss}}}

\newtheorem{theorem}{Theorem}[section]
\newtheorem{proposition}[theorem]{Proposition}
\newtheorem{lemma}[theorem]{Lemma}
\newtheorem{corollary}[theorem]{Corollary}

 \theoremstyle{definition}
 \newtheorem{definition}[theorem]{Definition}

 \theoremstyle{remark}
 \newtheorem{remark}[theorem]{Remark}

\numberwithin{equation}{section}

\makeatletter
\renewcommand\subsubsection{\@secnumfont}{\bfseries}%
\renewcommand\subsubsection{\@startsection{subsubsection}{3}
\z@{.5\linespacing\@plus.7\linespacing}{-.5em}%
{\normalfont\bfseries}}
\makeatother
\RequirePackage[margin=1.3in]{geometry}
\usepackage{comment}
\usepackage{xcolor}
\usepackage{tensor}
\usepackage[rightcaption]{sidecap}
\usepackage{graphicx} 
\graphicspath{ {images/} }
\usepackage{wrapfig}
\usepackage{float}
\RequirePackage{tcolorbox}

\usepackage{tikz}
\usetikzlibrary{arrows.meta, calc}

\newcommand{\Ric}{\text{Ric}}

\newcommand{\Div}{\text{div}}
\newcommand{\Vol}{\text{Vol}}
\newcommand{\Hyp}{\mathbb{H}^{n+1}(-1)}

\newcommand{\A}{\alpha}
\newcommand{\B}{\beta}
\newcommand{\G}{\gamma}
\newcommand{\D}{\delta}
\newcommand{\E}{\epsilon}

\begin{document}

\title[Eigenvalue Estimates on AH Manifolds]{Sharp Spectral Bounds for the $p$-Laplacian and Polyharmonic Operators on Asymptotically Hyperbolic Manifolds}
\author{Samuel P\'erez-Ayala}
\address{Samuel P\'erez-Ayala\\370 Lancaster Ave \\Haverford College  \\ Haverford\\PA 19041\\ USA}
\email{sperezayal@haverford.edu}

\keywords{Asymptotically hyperbolic, $p$-Laplacian, polyharmonic operators, eigenvalue estimates}
\subjclass[2020]{}
\begin{abstract}
We derive sharp bounds for three types of eigenvalue problems. First, we derive an upper bound for the first $p$-Dirichlet eigenvalue on conformally compact (CC) spaces. As a consequence, we show that for a class of CC submanifolds of asymptotically hyperbolic spaces, the asymptotic sectional curvatures, the meeting angle at infinity, and the vanishing of the norm of the mean curvature are all determined by its first $p$-Dirichlet eigenvalue. Additionally, we derive sharp upper bounds for the first eigenvalue of polyharmonic operators under both clamped and buckling boundary conditions. Finally, we prove sharp lower bounds for all three types of eigenvalue problems on weakly Poincar\'e-Einstein spaces with $\Ric_{g_+}\ge -ng_+$ and whose conformal infinity has nonnegative Yamabe constant, and on their submanifolds. 
\end{abstract}

\maketitle

\setcounter{tocdepth}{3}
\makeatletter
\renewcommand{\l@section}{\@tocline{1}{0pt}{0pt}{}{}}
\renewcommand{\l@subsection}{\@tocline{2}{0pt}{1.5em}{}{}}
\renewcommand{\l@subsubsection}{\@tocline{3}{0pt}{3em}{}{}}
\makeatother
\tableofcontents
\section{Introduction}
\subsection{Three eigenvalue problems} We are concerned with three types of eigenvalue problems: the eigenvalue problem for the $p$-Laplacian operator ($1<p<\infty$) under Dirichlet boundary conditions, the eigenvalue problem for the biharmonic operator under buckling boundary conditions, and the eigenvalue problem for polyharmonic operators under clamped boundary conditions. Most of these problems were initially introduced for Euclidean domains, and their relevance comes from multiple areas in science; see \cite{Ashbaugh1999}, and references therein, for an introduction to these problems and motivation. Here, we are concerned with the bottom of the spectrum of these operators on a large class of complete and noncompact manifolds $(X,g)$. We proceed with some definitions.

Let $\Omega\subset X$ be a bounded domain with a smooth boundary $\partial \Omega$. The Dirichlet eigenvalue problem for the $p$-Laplacian operator on $\Omega$ is finding a nontrivial solution of
\begin{equation}\label{p-Equation}
    \begin{cases}
        \Delta_p f + \lambda |f|^{p-2}f = 0 & \quad\text{ in }\Omega,\\
        f = 0 & \quad\text{ on }\partial\Omega,
    \end{cases}
\end{equation}
where $\Delta_p f = \text{div}(|\nabla_{g}f|^{p-2}\nabla_{g}f)$ is the so-called $p$-Laplace operator and where $1<p<\infty$. Note that $\Delta_2=\Delta_{g}$ is linear as it is just the standard Laplace-Beltrami operator. Our focus is on the first $p$-Dirichlet eigenvalue, whose variational characterization is given by
\begin{equation}\label{p-Eigenvalue}
    \lambda_{1,p}(\Omega) = \inf_{f\in W^{1,p}_0(\Omega)\setminus\{0\}} \frac{\displaystyle\int_\Omega |\nabla_{g}f|^p\;dv_{g}}{\displaystyle\int_\Omega |f|^p\;dv_{g}}
\end{equation}
where, and in more generality, $W^{k,p}_0(\Omega)$ is the completion of $C^\infty_0(\Omega)$ under the Sobolev norm $\|\cdot\|_{L^{k,p}(g)}$. The reader can consult \cites{LEAn2006Epft,LindqvistPeter}, and references therein, for the definition of the first Laplace $p$-Dirichlet eigenvalue (\ref{p-Eigenvalue}) and basic properties of the associated eigenfunction. 

Second, we study the following eigenvalue problem for polyharmonic operators: 
\begin{equation}\label{ClampedProblem}
\begin{cases}
    (-\Delta_{g})^lf = \Gamma f & \text{ in }\Omega \\ f = \partial_\nu f =\cdots = \partial^{l-1}_\nu f =0 &\text{on }\partial \Omega,
\end{cases}
\end{equation}
where $l\in\mathbb{N}$ and $\nu$ is the inner unit normal vector field along $\partial \Omega$. When $l = 1$, (\ref{ClampedProblem}) corresponds to the standard Dirichlet eigenvalue problem on $\Omega$ for the Laplace operator, whereas if $l=2$ (\ref{ClampedProblem}), it is related to the so called oscillating clamped plate \cite{JostJurgen2011Ubfe}. The operator $(-\Delta_{g})^2$ is called the biharmonic operator and, more generally, we refer to $(-\Delta_{g})^l$ as the polyharmonic operator of order $2l$. In conformal geometry, polyharmonic operators are relevant because they constitute the principal part of the so called GJMS operators \cite{GJMS}.

The boundary conditions (\ref{ClampedProblem}) are often referred to as Dirichlet-like or Dirichlet boundary conditions or clamped plate boundary conditions. We will adopt the latter term. The spectrum of $(-\Delta_{g})^l$ is discrete, real, and positive, and its first eigenvalue $\lambda_1^C((-\Delta_{g})^l,\Omega)$ can be characterized variationally as
\begin{equation}\label{VariationalCharacterization}
    \lambda_1^C((-\Delta_{g})^l,\Omega) = \inf_{f\in W^{2,l}_0(\Omega)\setminus\{0\}}R^C_l(f),
\end{equation}
where 
\begin{equation}\label{Rayleigh-Quotient-C}
    R^C_l(f) = 
\begin{cases}
    \frac{\displaystyle\int_\Omega |\Delta_{g}^mf|^2\;dv_{g}}{\displaystyle\int_\Omega |f|^2\;dv_{g}} & \text{if } l=2m, \\ \frac{\displaystyle\int_\Omega |\nabla_{g}\Delta_{g}^mf|^2\;dv_{g}}{\displaystyle\int_\Omega |f|^2\;dv_{g}} & \text{if }l=2m+1;
\end{cases}
\end{equation}
see for instance \cites{ZhangLiuwei2016Tlbo, BuosoLamberti2013}.

The third eigenvalue problem we consider is the so called buckling eigenvalue problem:
\begin{equation}\label{BucklingProblem}
\begin{cases}
    (-\Delta_{g})^2f = \Lambda \Delta_{g}f & \text{ in }\Omega \\ f = \partial_\nu f =0 &\text{on }\partial \Omega.
\end{cases}
\end{equation}
Its first eigenvalue $\lambda_1^B((-\Delta_{g})^2,\Omega)$ is characterized variationally as 
\begin{equation}
    \lambda_1^B((-\Delta_{g})^2,\Omega) = \inf_{f\in W^{2,2}_0(\Omega)\setminus\{0\}} R^B(f),
\end{equation}
where 
\begin{equation}\label{Rayleigh-Quotient-B}
    R^B(f) = \frac{\displaystyle\int_\Omega |\Delta_{g}f|^2\;dv_{g}}{\displaystyle\int_\Omega |\nabla_{g}f|^2\;dv_{g}}.
\end{equation}
This is discussed, for instance, in \cite{BuosoLamberti2013}.

Note that, in all the cases, these eigenvalue problems are monotonic nonincreasing with respect to domain inclusion. For a noncompact manifold $(X,g)$, the first $p$-Dirichlet eigenvalue is defined as \[\lambda_{1,p}(X,g):=\inf_{\Omega\subset X}\lambda_{1,p}(-\Delta_{g}, \Omega),\] where $\Omega$ ranges over all the bounded domains with smooth boundary; see \cite{PerezAyalaTyrrell} and references therein. For $p=2$, this is just the Laplacian operator, which is the polyharmonic operator with $l=1$. For the biharmonic operator ($l=2$), the first Dirichlet eigenvalue of a noncompact manifold $(X,g)$ is defined as \[\inf_{\Omega\subset X}\lambda_1^C((-\Delta_{g})^2, \Omega);\] see \cites{LinHezi2025Sgef,KristalyAlexandru2020Ftoc, FarkasKajantoKristaly2025} and references therein. More generally,

\begin{definition}
We define the first clamped eigenvalue of the polyharmonic operator $(-\Delta_g)^l$ on $(X,g)$ as 
\begin{equation}\label{Poly-Eigenvalue-Def}
    \Gamma^l_1(X,g):= \inf_{\Omega\subset X}\lambda_1^C((-\Delta_{g})^l, \Omega),
\end{equation}
where $\Omega$ ranges over all bounded domain with smooth boundary. Similarly, we define the first buckling eigenvalue on $(X^{n+1},g)$ as 
\begin{equation}
    \Lambda_1(X,g):=\inf_{\Omega\subset X} \lambda_1^B((-\Delta_{g})^2,\Omega),
\end{equation}
where $\Omega$ ranges over all the bounded domains with a smooth boundary.
\end{definition}

In this work, we are concerned with first eigenvalue estimates on conformally compact manifolds, asymptotically hyperbolic manifolds, and their submanifolds. We proceed to explain what these are and some of their key properties. Additionally, we provide some examples. 

\subsection{Conformally compact manifolds - definitions and examples}

The model case for an asymptotically hyperbolic manifold, an important subclass of conformally compact manifolds, is hyperbolic space $\mathbb{H}^{n+1}(-1)$, where we use the Poincar\'e ball model. That is, if we denote by $\mathbb{B}^{n+1}$ the open unit ball in $\mathbb{R}^{n+1}$, then $\mathbb{H}^{n+1}(-1) = (\mathbb{B}^{n+1}, 4(1-|x|^2)^{-2}g_{\mathbb{R}^{n+1}}) = (\mathbb{B}^{n+1}, g_H)$. The asymptotically hyperbolic manifolds are those that resemble $\mathbb{H}^{n+1}(-1)$ outside some compact set. In what follows, we formalize this intuition, discuss key aspects and provide some examples. 

\begin{definition}[Conformally Compact Manifold]
Let $X^{n+1}$ be the interior of a smooth and compact manifold $\overline X$ with smooth and nonempty boundary $\partial X$, and let $g_+$ be a complete metric on $X$. We call a defining function a smooth function $r:\overline X\to [0,\infty)$ with $r^{-1}({0}) = \partial X$ and $dr\not = 0$ on $\partial X$. The manifold $(X^{n+1},g_+)$ is said to be conformally compact (CC) of order $C^{m,\alpha}$ if there exists a defining function $r$ for its boundary $\partial X$ such that $\overline g = r^2g_+$ extends to a $C^{m,\alpha}$ metric on $\overline{X}$. Throughout this work, we always assume that our CC manifolds are $C^{3,\alpha}$, $\alpha\in(0,1)$.
\end{definition}
If $\rho$ is any other defining function, then the induced metric $(\rho^2g_+)|_{(T\partial X)^2}$ on the boundary $\partial X$ is conformally related to $(r^2g_+)|_{(T\partial X)^2}$. Thus, a CC manifold $(X^{n+1},g_+)$ induces a canonical conformal class $[g_+]_\infty:= [(r^2g_+)|_{(T\partial X)^2}]$ on $\partial X$. This is an invariant of $(X^{n+1},g_+)$, and the conformal manifold $(\partial X,[g_+]_\infty)$ is known as the conformal infinity. This is explained in \cite{Graham}, where some context and motivation from physics are given. Furthermore, it was shown in \cite{mazzeo1986hodge} that, as $r\to 0^+$,
\begin{equation}\label{RiemTensor-HolCoord}
    (R_{g_+})_{ijkl} = -|dr|^2_{\overline g}[(g_+)_{ik}(g_+)_{jl} - (g_+)_{il}(g_+)_{jk}] + O_{ijkl}(r^{-3}),
\end{equation}
where $O_{ijkl}(r^{-3})$ denotes the components of a four tensor. An important a consequence of (\ref{RiemTensor-HolCoord}) is that the sectional curvatures of a conformally compact manifold tend to $-|dr|^2_{\bar g}(y)$ as we approach any point $y\in \partial X$ at the boundary. Therefore, even though $|dr|^2_{\bar g}$ depends on the defining function $r$, its restriction $|dr|^2_{\bar g}|_{\partial X}$ to $\partial X$ is an invariant of $(X^{n+1},g_+)$; see Section 2 in \cite{Graham}. This motivates the next definition: 

\begin{definition}[Asymptotically Hyperbolic Manifold]
Asymptotically hyperbolic manifolds are those CC spaces for which $|dr|^2_{\bar g}|_{\partial X}$ is constant and equals to $1$ on $\partial X$, i.e, asymptotically hyperbolic manifolds are conformally compact manifolds for which the sectional curvatures approach $-1$ at $\partial X$. 
\end{definition}

In the case of hyperbolic space $\Hyp$, $r(x) =\frac{1-|x|}{1+|x|}$ works as a defining function, and $(r^2g_H)|_{(T\mathbb{S}^n)^2}$ is a constant multiple of the standard round $g_{\mathbb{S}^n}$ metric on $\mathbb{S}^n$. Therefore, $(\mathbb{S}^n,g_{\mathbb{S}^n})$ is the conformal infinity of $\Hyp$. Moreover, it is asymptotically hyperbolic as its sectional curvatures are $-1$ everywhere. The given defining function satisfies the special property that $|dr|^2_{r^2g_H} \equiv 1$ not only on $\mathbb{S}^n$ but also on a neighborhood of $\mathbb{S}^n$ in $\Hyp$ (everywhere except at the origin in this case). This allows us to write the hyperbolic metric $g_H$ as $g_H = r^{-2}(dr^2+g_r)$, where $g_r$ is a one parameter family of metrics on $\mathbb{S}^n$ given by $g_r = 4^{-1}(1-r^2)^2g_{\mathbb{S}^n}$; see Section 2 in \cite{Graham}.

More generally, it is a result of Graham and Lee \cite{GrahamLee} that given any $\hat g\in [g_+]_{\infty}$ in the conformal infinity of an AH manifold, there is a defining function $r$ such that $(r^2g_+)|_{(T\partial X)^2} = \hat g$ and $|dr|^2_{\bar g}\equiv 1$ on a neighborhood of $\partial X$; a defining function with such a property is called a special or geodesic defining function.  This induces an identification of this neighborhood of $\partial X$ in $X^{n+1}$ with $\partial X\times [0,\epsilon_r)$, $\epsilon_r>0$, on which the compactified metric can be written as 
\begin{equation}\label{NormalForm}
\bar g = r^2g_+ = dr^2+g_r
\end{equation}
for a one-parameter family of metrics on $\partial X$ with $g_0 = \hat g$; see Section 2 in \cite{Perez-AyalaSamuel2025HaCc}. The metric as written in (\ref{NormalForm}) is said to be in normal form.  This coordinate system at infinity (see Figure \ref{fig:holographic-coords}) is sometimes referred to as holographic coordinates \cite{Perez-AyalaSamuel2025HaCc}, and it is relevant in the derivation of eigenvalue upper bounds. 

\begin{figure}[H]
    \centering
\begin{tikzpicture}[>=Stealth, thick, scale=1.3]

    \coordinate (Center) at (-5, 0); 
    
    \fill[red!10] (0,-1.5) arc (-18:18:4.85) -- (-1.2,1.15) arc (16:-16:4.15) -- cycle;

    \draw (0,-1.5) arc (-18:18:4.85) coordinate[pos=0.6] (P) node[at end, above right] {$\partial X$};
    
    \draw[blue, dashed] (-1.2,-1.15) arc (-16:16:4.15);
    \node at (-1.8, 0) {$X^{n+1}$};
    
    \coordinate (VEnd) at ($(P)!1.2cm!(Center)$);
    \draw[blue, ->] (P) -- (VEnd);
    
    \node[black, anchor=south, inner sep=4pt] at ($(P)!0.6cm!(Center)$) {$\nabla_{\bar{g}} r$};
    
    \begin{scope}[shift={(P)}]
        \pgfmathanglebetweenpoints{\pgfpointanchor{P}{center}}{\pgfpointanchor{Center}{center}}
        \let\angle\pgfmathresult
        \begin{scope}[rotate=\angle]
            \draw[blue, thin] (0.2,0) -- (0.2,0.2) -- (0,0.2);
        \end{scope}
    \end{scope}

    \draw[->] (1.3, 0) -- (2.8, 0);

    \begin{scope}[shift={(5.5,0)}]
        \fill[red!10] (-1.5, -1.3) rectangle (1.5, 0.4);

        \draw[<->] (-1.5, -1.3) -- (1.5, -1.3) node[right] {$\partial X$};
        
        \draw[blue, ->] (0, -1.3) -- (0, 1.5) node[right, black] {$r$};
        
        \draw[blue, dashed, <->] (-1.5, 0.4) -- (1.5, 0.4);
        
        \node[blue, fill=red!10, inner sep=1.5pt, font=\small] at (0, 0.4) {$\epsilon_r$};
        
        \node at (0, 1.8) {$\partial X \times [0, \epsilon_r)$};
    \end{scope}

\end{tikzpicture}
\caption{Coordinates near the boundary of an AH manifold.}
\label{fig:holographic-coords}
\end{figure}
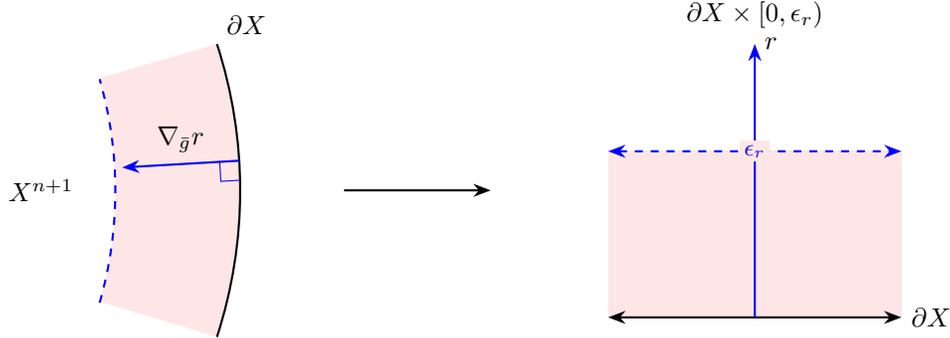

We conclude this subsection with a brief discussion of other conformally compact spaces. On any smooth compact Riemannian manifold $(\overline{X}^{n+1}, \bar g)$, there is always a unique defining function $r$ for the boundary $\partial X$ such that $g_+:=r^{-2}\bar g$ satisfies $R_g = -n(n+1)$ on $\text{int}(\overline X)$, however the regularity is delicate; see the introduction of \cite{GrahamC.Robin2017Vrfs}. These are examples of AH metrics. The AdS-Schwarzchild spaces are given by $(\mathbb{R}^2\times \mathbb{S}^2,g^m)$, where 
\[
g^m = V\,dt^2 + V^{-1}\,dr^2 + r^2 g_{\mathbb{S}^2}, \quad 
V = 1 + r^2 - \frac{2m}{r},
\]
with $m>0$ and $r\in [r_+,\infty)$, and where $r_+$ is the first positive root of $V(r) = 0$; see details in \cite{ChangSun-YungA.2007SPiC}. These are examples of Poincar\'e-Einstein (PE) spaces, i.e. CC manifolds with $\Ric_{g^m}\ge -3g^m$, whose sectional curvatures are not everywhere bounded above by $-1$; indeed, direct computation shows that, near $r_+$, some sectional curvatures are positive. A natural generalization of AdS-Schwarzchild spaces to higher dimensions and with nonspherical factors is explained in \cite{Anderson:2004yi}. Finally, quotients of hyperbolic space $(\mathbb{B}^{n+1}/\Gamma,g)$ which are, for instance, geometrically finite without cusps, are examples of nonsimply connected, PE spaces; the spectral properties of these are studied in \cite{SullivanDennis1987Raop}.

\subsection{Statement of the results}\label{DefinitionsStatements} 
Our main new contributions are (i) sharp upper bounds for the first Dirichlet eigenvalue of the $p$-Laplacian operator on conformally compact spaces; (ii)  a spectral rigidity result for asymptotically CMC submanifolds inside AH spaces; (iii) sharp upper bounds for the first eigenvalue of the polyharmonic operators under both clamped and buckling boundary conditions; and (iv) sharp lower bounds for a large class of weakly Poincar\'e-Einstein manifolds and their submanifolds. Regarding (iii), we remark that, to our knowledge, no Cheng-type inequality has been established for general polyharmonic operators under either clamped or buckling boundary conditions.

We now proceed to discuss these results, beginning with the upper bounds.

\subsubsection{Sharp eigenvalue upper bounds and a rigidity result}
In previous work by the author, it was shown that on an asymptotically hyperbolic manifold it holds that $\lambda_{1,p}(X^{n+1},g_+)\le \left(\frac{n}{p}\right)^p = \lambda_{1,p}(\Hyp,g_H)$ for any $1<p<\infty$; see Theorem 1.2 in \cite{PerezAyalaTyrrell}. Our first result is a generalization of this upper bound to a sharp upper bound on any conformally compact manifold, that is, we generalize it to the case where the asymptotic sectional curvatures are allowed to vary.

\begin{theorem}\label{UpperBound-CC}
    Let $(X^{n+1},g_+)$ be a conformally compact manifold, and let $r$ be a defining function for $\partial X$. Then 
    \begin{equation}\label{1stEigenEstimate}
        \lambda_{1,p}(X^{n+1},g_+) \le \left(\min|dr|_{\bar g}|_{\partial X}\right)^p\left(\frac{n}{p}\right)^{p} = \lambda_{1,p}(\mathbb{H}^{n+1}(-\min|dr|^2_{\bar g}|_{\partial X})),
    \end{equation}
    where $1<p<\infty$. Here $\mathbb{H}^{n+1}(-\min|dr|^2_{\bar g}|_{\partial X})$ denotes the simply connected manifold with constant sectional curvatures equal to $-\min|dr|^2_{\bar g}|_{\partial X}$. 
\end{theorem}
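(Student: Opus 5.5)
Our plan is to build explicit test functions concentrated near a boundary point $y_0\in\partial X$ where $|dr|_{\bar g}$ attains its minimum $a:=\min|dr|_{\bar g}|_{\partial X}>0$, and to show their Rayleigh quotients in (\ref{p-Eigenvalue}) approach $a^p(n/p)^p$. The equality $a^p(n/p)^p=\lambda_{1,p}(\mathbb{H}^{n+1}(-a^2))$ follows from scaling: $\mathbb{H}^{n+1}(-a^2)=(\mathbb{B}^{n+1},a^{-2}g_H)$, so $|\nabla f|^p$ scales by $a^p$ while the volume factor cancels in the quotient, together with $\lambda_{1,p}(\Hyp)=(n/p)^p$ from \cite{PerezAyalaTyrrell}.

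\textbf{Construction.} Work in coordinates $(r,y)$ on a neighborhood $U=\{0<r<\epsilon\}\times V$ of $y_0$, with $V\subset\partial X$ a small coordinate ball around $y_0$. On $U$ we have $dv_{g_+}=r^{-n-1}dv_{\bar g}$, with $dv_{\bar g}$ comparable to $dr\,dy$, and $|\nabla_{g_+}f|_{g_+}=r|\nabla_{\bar g}f|_{\bar g}$. Take
\[
f=r^{s}\,\phi(r)\,\psi(y),
\]
where $s=\frac{n}{p}+\delta$ with $\delta\downarrow0$, $\psi$ is a cutoff supported in $V$, and $\phi$ cuts off away from $r=\epsilon$. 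The function $f$ is not compactly supported near $r=0$, but $\int|f|^p r^{-n-1}\,dr\sim\int r^{p\delta-1}\,dr<\infty$, and the gradient integral is also finite. Truncating at $r=\eta$ and letting $\eta\to0$ therefore shows that $f$ is an admissible limit of test functions on bounded domains; we can alternatively truncate with a logarithmic cutoff. Since $\lambda_{1,p}(X)$ is an infimum over domains, it suffices to bound the quotient of $f$.

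\textbf{Estimate.} Write $\nabla_{\bar g}f=s r^{s-1}\phi\psi\nabla_{\bar g}r+r^s\nabla_{\bar g}(\phi\psi)$. The dominant term gives $r|\nabla_{\bar g}f|=s\,r^{s}|dr|_{\bar g}\,|\phi\psi|+O(r^{s+1})+O(r^{s}|\nabla\psi|)$. The $\nabla\psi$ term looks as large as the main term, but it carries no derivative in $r$. The key point is that, as $\delta\to0$, the $r$-integral $\int_0 r^{p\delta-1}\,dr\sim 1/(p\delta)$ blows up. Every term weighted by $r^{sp}\cdot r^{-n-1}\cdot r^{\text{extra}}$ with an extra positive power of $r$ stays bounded, and the $\phi'$ term is supported away from $0$ and also stays bounded. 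The $\nabla\psi$ term, however, has the same $r$-power and so also diverges. To handle it, we use Minkowski's inequality: $\|r\nabla_{\bar g}f\|_{L^p}\le s\|\,|dr|f\|_{L^p}+\|r^{s}\phi\nabla\psi\|_{L^p}$. The ratio $\|r^s\phi\nabla\psi\|_p/\|f\|_p$ equals $\|\nabla\psi\|_{L^p(V)}/\|\psi\|_{L^p(V)}$ up to errors, which is a fixed number rather than something small. We fix this issue by anisotropic scaling. Restricting to the leading asymptotics, the computation reduces to $r$-integrals at each fixed $y$, and $r|\partial_y f|$ should be compared with $r\cdot r^{s}$, not $r^s$: the $\bar g$-gradient in $y$ of $r^s\psi(y)$ has size $r^s|\nabla\psi|$, and it is multiplied by the conformal factor $r$ in $|\nabla_{g_+}f|=r|\nabla_{\bar g}f|$. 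Recounting with this factor, the main term is $r\cdot s r^{s-1}=sr^{s}$ while the $\psi$ term is $r^{s+1}|\nabla\psi|$, which carries an extra $r$. It is therefore lower order after all and stays bounded as $\delta\to0$. Hence
\[
\frac{\int|\nabla_{g_+}f|^p\,dv_{g_+}}{\int|f|^p\,dv_{g_+}}\;\longrightarrow\;s^p\,\frac{\int_V|dr|^p_{\bar g}(0,y)\,\psi^p\,dy}{\int_V\psi^p\,dy}+o(1)\qquad\text{as }\delta\to0,
\]
because both integrals are dominated by their divergent $r\to0$ parts and $|dr|_{\bar g}(r,y)\to|dr|_{\bar g}(0,y)$.

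\textbf{Conclusion and main obstacle.} Shrinking $V$ toward $y_0$ makes the weighted average of $|dr|^p_{\bar g}$ tend to $a^p$, and $s\to n/p$, which gives (\ref{1stEigenEstimate}). The main obstacle is the careful bookkeeping of the lower-order terms in the Rayleigh quotient, in particular the cross term $|A+B|^p$ expanded for general $p$. We would control it with the elementary inequality $|A+B|^p\le(1+\epsilon)|A|^p+C_\epsilon|B|^p$, so that only the divergent leading part survives after normalizing. We also need to justify the approximation by compactly supported functions; for this we would truncate with $\min\bigl(1,\log(r/\eta^2)/\log(1/\eta)\bigr)$-type cutoffs, whose cost vanishes because $p\delta>0$.
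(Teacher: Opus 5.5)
Your proof is correct, but it takes a different route from the paper. The paper writes down no test function for this theorem. Instead it uses the curvature asymptotics (\ref{RiemTensor-HolCoord}) to find, near a boundary point minimizing $|dr|_{\bar g}$, geodesic balls $B(p,R)$ of arbitrarily large radius on which $\Ric_{g_+}\ge -n a^2(1+\epsilon)g_+$, with $a=\min|dr|_{\bar g}|_{\partial X}$. It then applies Takeuchi's Cheng-type comparison with the ball of radius $R$ in $\mathbb{H}^{n+1}(-a^2(1+\epsilon))$, and quotes the limit of $\lambda_{1,p}$ of such balls as $R\to\infty$. That argument is short and has a clean geometric meaning. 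However, it rests on two external theorems and on second-order control of $\bar g$ up to $\partial X$, which the curvature expansion requires.

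Your argument is self-contained. Taking $s=n/p+\delta$ makes both integrals of order $1/(p\delta)$ and concentrates them at $r=0$, where $|\nabla_{g_+}r^s|_{g_+}=s\,r^s|dr|_{\bar g}$ exactly. Every other contribution (from $\phi'$, from $\nabla\psi$, and from $|dr|_{\bar g}(r,y)-|dr|_{\bar g}(0,y)$) either carries an extra power of $r$ or lives away from $r=0$, so it stays bounded. The inequality $|A+B|^p\le(1+\epsilon')|A|^p+C_{\epsilon'}|B|^p$ then yields $(n/p)^p$ times an average of $|dr|^p_{\bar g}(0,\cdot)$ over the support of $\psi$, and shrinking $V$ gives $a^p(n/p)^p$. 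This needs only continuity of $\bar g$ up to the boundary. It is the same test-function idea the paper uses for Theorem~\ref{Sharp-UpperBounds}, but with $s\downarrow n/p$ rather than $s\uparrow n/2$ combined with a cutoff at scale $\epsilon$. It is also localized in $y$, which is what handles a nonconstant $|dr|_{\bar g}$.

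When you write it up, make three small changes.
\begin{itemize}
\item Drop the false start. The $\nabla\psi$ contribution to $|\nabla_{g_+}f|_{g_+}=r|\nabla_{\bar g}f|_{\bar g}$ is $r^{s+1}\phi|\nabla_{\bar g}\psi|_{\bar g}$ from the outset, so neither the Minkowski detour nor any anisotropic scaling is involved.
\item Keep the density of $dv_{\bar g}$ with respect to $dr\,dy$, evaluated at $r=0$, as a weight in the boundary average. This is harmless.
\item For admissibility, a linear cutoff in $r$ on $[\eta,2\eta]$ already suffices. Its $g_+$-gradient is $O(1)$, and $\int_\eta^{2\eta}r^{p\delta-1}\,dr=O(\eta^{p\delta})\to0$ for fixed $\delta>0$. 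The logarithmic cutoff would only be needed at $\delta=0$.
\end{itemize}
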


Note that the equality in (\ref{1stEigenEstimate}) is a simple rescaling argument; see Corollary 1.3 in \cite{PerezAyalaTyrrell}. The estimate (\ref{1stEigenEstimate}) was known in the case of $p=2$ owing to the work of Mazzeo in \cite{MazzeoRafe1991UCaI}, see also Theorem 1.1 in the work of Charalambous--Rowlett \cite{CharalambousNelia2024TLso} and references therein. To our knowledge, the result is new in all other cases ($p\not = 2$). The key ingredient in the proof of Theorem \ref{UpperBound-CC} is a Cheng-type inequality for the first $p$-Dirichlet eigenvalue proved by Takeuchi \cite{Takeuchi} together with asymptotic estimates for the $p$-Dirichlet eigenvalue on geodesic balls in hyperbolic space given in \cite{jin2024lowerbounddirichleteigenvalue}.

A simple consequence of Theorem \ref{UpperBound-CC} is that if $\lambda_{1,p}(X^{n+1},g_+) = \lambda_{1,p}(\Hyp)$, then the asymptotic sectional curvatures are bounded above by $-1$. This gives a rigidity result on a large class of submanifolds of asymptotically hyperbolic spaces that were introduced in our previous work \cite{PerezAyalaTyrrell} (Definition 1.6). We call these submanifolds asymptotically CMC  submanifolds - immersed submanifolds $\iota: Y^{k+1}\to (X^{n+1},g_+)$ which are conformally compact themselves, meet $\partial X$ transversely along  $\partial Y\subset \partial X$. Furthermore, the (unnormalized) mean curvature $H^Y$ satisfies $g_+(H^Y,H^Y) = C^2+O(r)$ for a defining function $r$. Finally, we assume that these submanifolds are $C^{m,\alpha}$ for $m\geq 3, \alpha\in (0,1).$

\begin{corollary}\label{UpperBound-CC-Sub}
    Let $(X^{n+1},g_+)$ be an asymptotically hyperbolic manifold and let $Y^{k+1}$ be an asymptotically CMC submanifold. If $\lambda_{1,p}(Y^{k+1}) = \lambda_{1,p}(\mathbb{H}^{k+1}(-1))$, then $(Y^{k+1},h_+)$ is asymptotically hyperbolic. Equivalently, $(Y^{k+1},h_+)$ is asymptotically minimal and meets $\partial X$ orthogonally. 
\end{corollary}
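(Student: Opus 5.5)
Since $Y^{k+1}$ is conformally compact with the induced metric $h_+=\iota^*g_+$, I will apply Theorem \ref{UpperBound-CC} to $(Y^{k+1},h_+)$ with the defining function $\rho=r\circ\iota$, where $r$ is a geodesic defining function for $(X,g_+)$. This gives
\[
\lambda_{1,p}(\mathbb{H}^{k+1}(-1))=\lambda_{1,p}(Y^{k+1},h_+)\le \Big(\min|d\rho|_{\bar h}|_{\partial Y}\Big)^p\Big(\frac{k}{p}\Big)^p,
\]
with $\bar h=\rho^2h_+=\iota^*\bar g$. Because $\lambda_{1,p}(\mathbb{H}^{k+1}(-1))=(k/p)^p$, this forces $\min_{\partial Y}|d\rho|_{\bar h}\ge 1$.

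Next I compare $|d\rho|_{\bar h}$ with $|dr|_{\bar g}=1$ on $\partial Y$. At a point of $\partial Y$, $\nabla_{\bar h}\rho$ is the $\bar g$-orthogonal projection of $\nabla_{\bar g}r$ onto $T Y$. Hence
\[
|d\rho|^2_{\bar h}=|dr|^2_{\bar g}\cos^2\theta=\cos^2\theta\le 1,
\]
where $\theta$ is the angle at infinity between $\nabla_{\bar g}r$ (the normal to $\partial X$) and $\overline Y$. Combined with the previous bound, this gives $\cos^2\theta\equiv1$ on $\partial Y$, so $|d\rho|^2_{\bar h}|_{\partial Y}\equiv1$. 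By the definition, $(Y,h_+)$ is then asymptotically hyperbolic, and $\overline Y$ meets $\partial X$ orthogonally.

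For the equivalence with asymptotic minimality, I will use the relation from \cite{PerezAyalaTyrrell}, which comes from the Gauss equation together with the asymptotics (\ref{RiemTensor-HolCoord}). For an asymptotically CMC submanifold, the asymptotic sectional curvature of $Y$ is $-|d\rho|^2_{\bar h}$. In an asymptotically hyperbolic ambient space, the Gauss equation gives sectional curvatures of $Y$ equal to $-1+\langle \mathrm{II}(e_i,e_i),\mathrm{II}(e_j,e_j)\rangle-|\mathrm{II}(e_i,e_j)|^2+o(1)$. Conformal change of the second fundamental form gives $\mathrm{II}^{g_+}=\rho^{-1}\mathrm{II}^{\bar g}-\rho^{-2}\,(d r)^{\perp}\otimes\bar g|_{TY}$. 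The trace-free part is therefore $O(\rho)$ in $g_+$-norm, while the umbilic part has $g_+$-size $|(\nabla_{\bar g} r)^\perp|_{\bar g}=\sin\theta$. This yields $|H^Y|^2_{g_+}\to (k+1)^2\sin^2\theta$ and asymptotic sectional curvature $-1+\sin^2\theta=-\cos^2\theta$, consistent with the computation above. Thus $C=0$ if and only if $\theta\equiv0$ if and only if $|d\rho|_{\bar h}\equiv1$ on $\partial Y$, giving all three equivalent conclusions.

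The main obstacle I expect is the boundary projection identity $|d\rho|_{\bar h}=\cos\theta$ and the matching mean curvature asymptotics. Both require transversality and the stated $C^{m,\alpha}$ regularity so that the conformal change formulas extend continuously to $\partial Y$. I would first try to establish these cleanly in holographic coordinates (\ref{NormalForm}), where $\nabla_{\bar g}r=\partial_r$.
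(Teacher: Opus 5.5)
Your proposal is correct and follows the paper's argument. Like the paper, you apply Theorem \ref{UpperBound-CC} to $(Y^{k+1},h_+)$ with the restricted defining function $r|_{\overline Y}$, and combine the resulting bound $\min_{\partial Y}|d\rho|_{\bar h}\ge 1$ with $|d\rho|_{\bar h}\le |dr|_{\bar g}=1$ on $\partial Y$ to force equality. The only difference is that you derive the relations $|d\rho|^2_{\bar h}=\cos^2\theta$ and $|H^Y|^2_{g_+}\to(k+1)^2\sin^2\theta$ yourself, from the conformal change of the second fundamental form and the Gauss equation, whereas the paper cites Proposition 1.8 of \cite{PerezAyalaTyrrell} and its proof for the vanishing of $C$ and the orthogonality at infinity.
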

\begin{remark}
In \cite{PerezAyalaTyrrell}, Corollary 1.11, it was shown that if $Y^{k+1}$ is a minimal conformally compact submanifold of $\Hyp$, then $\lambda_{1,p}(Y^{k+1}) = \lambda_{1,p}(\mathbb{H}^{k+1}(-1))$. Corollary \ref{UpperBound-CC-Sub} above provides a partial converse inside more general AH spaces, where ``minimal'' is replaced by the much weaker condition ``asymptotically minimal''.
\end{remark}
We would like to emphasize that the submanifold $Y^{k+1}$ in Corollary \ref{UpperBound-CC-Sub} is only assumed to be conformally compact, therefore its asymptotic sectional curvatures could vary. Thus, knowing that the bottom of the spectrum is that of hyperbolic space forces the asymptotic sectional curvature to be constant and equal to $-1$, it forces its mean curvature to vanish at $\partial Y$, and it forces $Y^{k+1}$ to meet $\partial X$ orthogonally. Using Kac's famous analogy \cite{KacMark1966COHt} ``Can one hear the shape of the drum?'', the asymptotic sectional curvatures, the asymptotic mean curvature and the meeting angle at infinity of an asymptotically CMC submanifold inside an asymptotically hyperbolic manifold can all be heard. 

We now proceed with a discussion on upper bounds for $\Gamma^l_1(X^{n+1},g_+)$ and $\Lambda_1(X^{n+1},g_+)$. To our knowledge, there is no Cheng-type inequality analogue for $\lambda_1^C((-\Delta_{g_+}),\Omega)$ or $\lambda_1^B((-\Delta_{g_+}),\Omega)$. However, our techniques require only the manifold to be asymptotically hyperbolic. The second main result of this work is as follows:

\begin{theorem}\label{Sharp-UpperBounds}
    Let $(X^{n+1},g_+)$ be an AH manifold. Then
    \begin{equation}\label{Sharp-UB-Clamped}
        \Gamma^l_1(X^{n+1},g_+)\le \left(\frac{n}{2}\right)^{2l} = \Gamma^l_1(\mathbb{H}^{n+1}(-1)),
    \end{equation}
    and 
    \begin{equation}\label{Sharp-UB-Buckling}
        \Lambda_1(X^{n+1},g_+)\le \frac{n^2}{4} = \Lambda_1(\mathbb{H}^{n+1}(-1)).
    \end{equation}
\end{theorem}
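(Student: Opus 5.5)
We prove both upper bounds by constructing test functions supported in the collar $\partial X\times(0,\epsilon)$ given by holographic coordinates (\ref{NormalForm}). There the metric is close to hyperbolic, and on such functions the Rayleigh quotients $R^C_l$ and $R^B$ approach $(n/2)^{2l}$ and $n^2/4$.

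Fix a representative $\hat g$ of the conformal infinity and a geodesic defining function $r$, so that $g_+=r^{-2}(dr^2+g_r)$ on $\partial X\times(0,\epsilon_r)$. Write $r=e^{-t}$ with $t\in(T_0,\infty)$. Then $g_+=dt^2+e^{2t}g_{e^{-t}}$, and
\[
dv_{g_+}=e^{nt}\,(1+O(e^{-t}))\,dv_{\hat g}\,dt .
\]
For a function $f=u(t)$ depending only on $t$, we have $|\nabla_{g_+}f|^2=u'^2$ and
\[
\Delta_{g_+}f=u''+(n+O(e^{-t}))u' ,
\]
where the error comes from $\partial_t\log\sqrt{\det g_r}$ being $O(r)$ in the $r$-variable. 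The model is the hyperbolic operator $L u=u''+nu'$ acting in $L^2(e^{nt}dt)$. Conjugating by $e^{nt/2}$, that is setting $u=e^{-nt/2}w$, gives
\[
e^{nt/2}Le^{-nt/2}w=w''-\tfrac{n^2}{4}w .
\]
So the bottom of the model spectrum is $n^2/4$, with $-L$ unitarily equivalent to $-d^2/dt^2+n^2/4$ on $L^2(dt)$.

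Take $w_R(t)=R^{-1/2}\phi((t-T)/R)$ with $\phi\in C^\infty_0(0,1)$ fixed, $T\gg T_0$ and $R\to\infty$, and set $f_R=e^{-nt/2}w_R$. In the model these satisfy
\[
(-L)^k f_R=e^{-nt/2}\Bigl(\tfrac{n^2}{4}-\partial_t^2\Bigr)^k w_R=e^{-nt/2}\Bigl(\bigl(\tfrac{n^2}{4}\bigr)^k w_R+O(R^{-2})\Bigr)
\]
in $L^2(dt)$-norm relative to $\|w_R\|=\|\phi\|$, since each $t$-derivative of $w_R$ gains a factor $R^{-1}$. Hence in the model
\[
\frac{\int|\Delta^m f_R|^2}{\int|f_R|^2}\to\bigl(\tfrac{n}{2}\bigr)^{4m},
\]
and, when $l=2m+1$, $\int|\nabla\Delta^mf_R|^2=\int(-Lf)\,\cdot\,$(appropriate pairing) gives $(n/2)^{2(2m+1)}$ in the limit. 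For buckling, $\int|\Delta f|^2/\int|\nabla f|^2=\langle (-L)^2f,f\rangle/\langle -Lf,f\rangle\to n^2/4$. The identity $\int|\nabla\Delta^m f|^2=\langle(-\Delta)^{2m+1}f,f\rangle$ is valid because $f_R$ is compactly supported, so these functions lie in $W^{l,2}_0$ of a bounded smooth domain.

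Next we compare with $X$. On the support $t\in[T,T+R]$, every coefficient of $\Delta_{g_+}$ and of the volume density differs from the model by $O(e^{-T})$, together with a bounded number of $t$-derivatives of these errors; this uses the $C^{3,\alpha}$ regularity of $\bar g$, and for higher $l$ one needs sufficiently many derivatives of $g_r$ (or else argues via $W^{2,2}$-type bounds only). Expanding $\Delta^m_{g_+}f_R$, each error term is bounded by $Ce^{-T}$ times sums of $|\partial_t^j w_R|e^{-nt/2}$. Therefore
\[
R^C_l(f_R)\le\bigl(\tfrac{n}{2}\bigr)^{2l}+C(R^{-2}+e^{-T}),\qquad R^B(f_R)\le\tfrac{n^2}{4}+C(R^{-2}+e^{-T}).
\]
Letting $T,R\to\infty$ and using monotonicity under domain inclusion (with $\Omega$ a smooth bounded domain containing $\mathrm{supp} f_R$) yields (\ref{Sharp-UB-Clamped}) and (\ref{Sharp-UB-Buckling}).

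The equalities with the hyperbolic values follow in two steps. The upper bound holds because $\mathbb{H}^{n+1}$ is AH. The matching lower bounds come from positivity of the self-adjoint operator $-\Delta-n^2/4\ge0$ on $\mathbb{H}^{n+1}$, which is McKean's inequality:
\[
\langle(-\Delta)^lf,f\rangle\ge(n^2/4)^l\|f\|^2 \quad\text{by the spectral theorem},\qquad \|\Delta f\|^2\ge\tfrac{n^2}{4}\|\nabla f\|^2 \quad\text{for } f\in C_0^\infty .
\]

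The main obstacle is controlling the error terms for higher powers $\Delta^m_{g_+}$. The $O(e^{-t})$ perturbation must remain of lower order after repeated differentiation, which requires uniform bounds on $\partial_t^j$ of $\log\det g_r$. If the regularity of $\bar g$ is insufficient for large $l$, the first remedy to try is smoothing: replace $r$ by a smooth defining function that is asymptotically geodesic to high order, or approximate $g_+$ by a smooth AH metric that is uniformly equivalent to it with errors decaying at the boundary.
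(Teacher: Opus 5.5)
Your argument is correct, but it reaches the bound by a different mechanism than the paper.

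The paper uses $r^s\phi_{\epsilon\delta}$ with $s\in(\frac{n-1}{2},\frac n2)$ and the piecewise-linear cutoff (\ref{CutOff-Function}). It evaluates numerator and denominator as explicit integrals of the form $\mathrm{Vol}_{\hat g}(\partial X)\,\epsilon^{2s-n}\bigl(\frac{A(s)}{2s-n+2}-\frac{B(s)}{2s-n}\bigr)$ plus lower-order terms. It lets $\delta\to0$, then $\epsilon\to0$, and only at the end sends $s\to\frac n2^-$, where the $(n-2s)^{-1}$ terms dominate.

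You instead sit exactly at the critical exponent $r^{n/2}=e^{-nt/2}$ and conjugate the radial model to $-\partial_t^2+\frac{n^2}{4}$. You then use a Weyl sequence of smooth bumps of width $R$ in $t=-\log r$, pushed out to $t\ge T$. This buys two things:
\begin{itemize}
\item A quantitative bound $(\frac n2)^{2l}+C(R^{-2}+e^{-T})$ that handles every $l$ and the buckling quotient by one spectral computation.
\item More importantly, genuinely admissible test functions. The paper's $r^s\phi_{\epsilon\delta}$ is only Lipschitz: its $r$-derivative jumps by $(1+\delta)\epsilon^{s-1}$ across $\{r=\epsilon\}$. So it is not in $W^{2,2}_0$, and the paper's pointwise expansions drop the singular part of $\Delta_{g_+}(r^s\phi_{\epsilon\delta})$ there. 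After smoothing, that part contributes only $O(\epsilon^{2s-n})$, without the $(n-2s)^{-1}$ factor, so it vanishes as $s\to\frac n2$. Your $f_R$ never needs this repair.
\end{itemize}

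You also get the hyperbolic equalities directly from McKean's inequality and the spectral theorem. The paper instead deduces them from its Lee-eigenfunction lower bounds on weakly Poincar\'e--Einstein spaces. The paper's route, in turn, keeps one explicit family of power-type test functions shared with its $p$-Laplacian arguments, where no conjugation or spectral calculus is available.

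Minor points:
\begin{itemize}
\item For odd $l$, write the numerator as $-\int F\,\Delta_{g_+}F\,dv_{g_+}$ with $F=\Delta^m_{g_+}f_R$.
\item Once the error terms depend on $y$, the tangential part $e^{-2t}\Delta_{g_r}$ of $\Delta_{g_+}$ also acts on them, harmlessly.
\item Your smoothing remedies for regularity are unnecessary. Approximating $g_+$ by another metric is also invalid, since that changes the operator. It suffices to assume $\bar g$ has enough derivatives up to $\partial X$, which the paper's inductive $O(r^{s+1})$ estimates tacitly assume as well.
\end{itemize}
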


The equality in both estimates demonstrates the sharpness of our results; it follows from these upper bound estimates combined with the lower bound estimates presented in the next subsection. Both inequalities (\ref{Sharp-UB-Clamped}) and (\ref{Sharp-UB-Buckling}) appear to be new in the literature. We note that AH manifolds could have positive curvature somewhere, and also they might not be simply connected, as noted in the introduction. However, the upper bounds are determined by the geometry near infinity, which is that of the model case $\Hyp$. 

We observe  that there is no assumption on the connectedness of $\partial X$. \cref{Sharp-UpperBounds} holds as long as $|dr|^2_{\bar g}|_{(T\partial X)^2}\equiv 1$ on one boundary component, even if it varies on other boundary components. The key idea is to apply Graham-Lee's result locally, and write $\bar g$ as in (\ref{NormalForm}) near that boundary component. Thus, the AH assumption in Theorem \ref{Sharp-UpperBounds} can be relaxed to only CC and require that the asymptotic sectional curvatures on one boundary component are constant and equal to $-1$. Finally, it is clear from the proof of Theorem \ref{Sharp-UpperBounds} that the same techniques can be used to obtain sharp upper bounds for more general clamped and buckling-type eigenvalue problems such as the ones considered in \cite{BuosoLamberti2013}.

The last part of our results concerns lower bounds for all three types of eigenvalue problems. 

\subsubsection{Sharp eigenvalue lower bounds}

Before discussing our results on the lower bounds, we make some further remarks about conformally compact manifolds. 

In addition to asymptotically hyperbolic manifolds, another important class of conformally compact manifolds is that of Poincar\'e-Einstein (PE) manifolds\footnote{These are also called Conformally Compact Einstein manifolds in the literature.}. PE manifolds are CC manifolds that also satisfy $\Ric_{g_+} = -ng_+$. It follows by contracting in (\ref{RiemTensor-HolCoord}) that any PE manifold is AH. When writing a PE metric in normal form, as in (\ref{NormalForm}), the expansion of $g_r$ is even, up to $n-1$ if $n$ is odd and up to $n$ if $n$ is even (Lemma 4 in \cite{ChangSun-YungA.2007SPiC}), and the first terms in the expansion are
\begin{equation}\label{WPE}
    \bar g = dr^2+g_r = dr^2+ \hat g - P_{\hat g}\cdot r^2 + O(r^3)
\end{equation}
as $r\to 0^+$, where $P_{\hat g}$ is the Schouten tensor of the induced boundary metric $\hat g$; see Section 2 in \cite{Graham}. Asymptotically hyperbolic manifolds whose metric has the same expansion as in (\ref{WPE}) in normal form are called weakly Poincar\'e-Einstein (WPE) - this is because the expansion matches that of a PE metric up to, and including, order $2$. In summary, 
\[
\{\text{PE}\}\subset \{\text{WPE}\}\subset \{\text{AH}\}\subset \{\text{CC}\}.
\]

Our results concern WPE manifolds satisfying $\Ric_{g_+}\ge -ng_+$, and whose conformal infinity has a nonnegative Yamabe constant $Y(\partial X, [g_+]_\infty)$. 

\begin{theorem}\label{Sharp-LowerBounds-pLap}
   Let $(X^{n+1},g_+)$ be a weakly Poincar\'e-Einstein manifold with $\Ric_{g_+}\ge -ng_+$, and assume that $Y(\partial X, [g_+]_\infty)\ge 0$.  
   \begin{enumerate}
   \item Then 
   \begin{equation}\label{Sharp-LB-pLap-Amb}
   \lambda_{1,p}(X^{n+1},g_+) = \left(\frac{n}{p}\right)^p.
   \end{equation}
   \item Let $\iota: Y^{k+1}\to X^{n+1}$ be a complete and noncompact immersion into $(X^{n+1},g_+)$ whose mean curvature satisfies $|H^Y|\le \alpha$ for some constant $\alpha$. If the submanifold invariant $\hat \beta^Y$ (see Definition \ref{Submanifold-Invariant}) satisfies $\alpha+\hat \beta^Y< k$, then  
   \begin{equation}\label{Sharp-LB-pLap-Sub}
       \lambda_{1,p}(Y^{k+1})\ge \left(\frac{k-\alpha - \hat \beta^Y}{p}\right)^p
   \end{equation}
   for any $1<p<\infty$. Here, $\hat \beta^Y$ is an invariant of $(Y^{k+1},\iota^*g_+)$ which is always nonnegative for noncompact submanifolds. 
   \end{enumerate}
\end{theorem}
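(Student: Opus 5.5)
The equality in (\ref{Sharp-LB-pLap-Amb}) has two directions. The upper bound $\lambda_{1,p}(X^{n+1},g_+)\le (n/p)^p$ is Theorem \ref{UpperBound-CC}: a WPE manifold is AH, so $|dr|_{\bar g}|_{\partial X}\equiv 1$. All the work is in the lower bound. For that I would use the standard test-function (Barta-type) argument for the $p$-Laplacian. Suppose there is a positive function $u$ on $X$ with
\[
|\nabla_{g_+}\log u|_{g_+}\le 1 \quad\text{and}\quad \Delta_{g_+}\log u\ge n .
\]
Set $V=\nabla_{g_+}\log u$, so $|V|\le 1$ and $\Div V\ge n$. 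For $f\in C_0^\infty(\Omega)$, integrate $\Div(|f|^pV)$ and apply H\"older:
\[
n\int|f|^p\le \int |f|^p\Div V=-p\int |f|^{p-2}f\langle\nabla f,V\rangle\le p\Big(\int|f|^p\Big)^{(p-1)/p}\Big(\int|\nabla f|^p\Big)^{1/p}.
\]
This gives $\int|\nabla f|^p\ge (n/p)^p\int|f|^p$, and so the lower bound. In fact it is enough to have a vector field $V$ with $|V|\le 1$ and $\Div V\ge n$.

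To produce such a $u$ I would use the Lee-type construction behind the $p=2$ result $\lambda_1=n^2/4$ for PE manifolds with nonnegative Yamabe infinity. The steps, in order:

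\begin{enumerate}
\item Choose $\hat g\in[g_+]_\infty$ with scalar curvature $R_{\hat g}\ge 0$, and let $r$ be the associated geodesic defining function.
\item Solve $\Delta_{g_+}u=(n+1)u$ with $u\sim r^{-1}$ at infinity. For hyperbolic space, $u=\cosh(\text{dist})$-type eigenfunctions do this.
\item Use the WPE expansion (\ref{WPE}) to compute the expansion of $u$ and of the quantity $|\nabla u|^2-u^2$. Here $\tr P_{\hat g}=R_{\hat g}/(2n)$ enters, and the sign of $R_{\hat g}$ gives $|\nabla u|^2_{g_+}\le u^2$ asymptotically.
\item Apply the Bochner formula with $\Ric_{g_+}\ge -ng_+$ to show that $w=|\nabla u|^2-u^2$ satisfies a differential inequality for which the maximum principle applies. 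Together with the boundary behaviour from step 3, this gives $w\le 0$ globally.
\item Conclude with $\log u$. We have $|\nabla\log u|\le1$ and
\[
\Delta\log u=(n+1)-|\nabla\log u|^2\ge n .
\]
Since the $p$-argument above only needs $|V|\le1$ and $\Div V\ge n$, no $p$-specific construction is required.
\end{enumerate}

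I expect steps 3 and 4 to be the main obstacle. One must check that the WPE expansion (agreement with PE only to order 2) is enough to control the leading term of $w$, and handle a possibly degenerate case where $w\to 0$ at infinity, so that the maximum principle applies to $w$ or to a suitable modification such as $w/u^2$. This is where I would lean on the published $p=2$ argument (Lee; and the prior work \cite{PerezAyalaTyrrell}).

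For part (2), restrict the ambient vector field. Let $F=\log u\circ\iota$ and $V=\nabla^YF$, the tangential projection of $\nabla\log u$, so $|V|\le1$. The submanifold Laplacian formula gives
\[
\Delta_YF=\tr_Y\nabla^2\log u+\langle H^Y,\nabla\log u\rangle .
\]
Since $|H^Y|\le\alpha$ and $|\nabla\log u|\le 1$, the mean curvature term is at least $-\alpha$. For the trace term, write $\nabla^2\log u=\nabla^2u/u-d\log u\otimes d\log u$. The trace over the $(k+1)$-plane of the first piece is compared with $(k+1)$ times the metric, and the deficit is what the invariant $\hat\beta^Y$ of Definition \ref{Submanifold-Invariant} is designed to measure. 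This yields $\Div V\ge k-\alpha-\hat\beta^Y>0$, and the same H\"older argument gives (\ref{Sharp-LB-pLap-Sub}).
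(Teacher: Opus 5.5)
Your proposal is correct and is essentially the paper's argument. The upper bound comes from Theorem \ref{UpperBound-CC}. The lower bound comes from the field $\nabla\log u$ for a Lee-eigenfunction $u$ with $|\nabla_{g_+}u|\le u$; the paper simply cites this gradient estimate from Lee's Theorem A and its refinement rather than re-deriving it via Bochner and the maximum principle as you sketch. In part (2) the field is restricted to $Y$ to get $\Delta_{h_+}\log\tilde u\ge k-\beta^Y(u)-\alpha$, and your single H\"older step is equivalent to the paper's Young inequality with an optimized parameter.

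Three minor points. First, on the $n$-dimensional boundary $\tr_{\hat g}P_{\hat g}=R_{\hat g}/(2(n-1))$, not $R_{\hat g}/(2n)$. Second, the degenerate case you worry about is harmless: only the non-strict bound $|\nabla u|\le u$ is used, and $w$ is subharmonic with $\limsup w\le 0$ at infinity. Third, exactly as in the paper, what you actually prove is the bound with $\beta^Y(u)$ for a $u$ obeying the gradient estimate, so passing to $\hat\beta^Y$ tacitly restricts the infimum to such $u$.
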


Let us begin with a discussion of (\ref{Sharp-LB-pLap-Amb}). Due to Theorem \ref{UpperBound-CC}, this is essentially a lower bound result. Theorem \ref{Sharp-LowerBounds-pLap}, part (1), is an extension to any $p>1$ of a result by Lee \cite{LeeJohnM.1995Tsoa} in the case of the Laplacian ($p=2$). Lee proved that for PE manifolds with $Y(\partial X,[g_+]_\infty)\ge 0$, we have $\lambda_{1,2}(X^{n+1},g_+) = \left(\frac{n}{2}\right)^2$, although it became clear later that his results also hold under our assumption; see the discussion in \cite{GuillarmouColin2010SCoP}. Theorem \ref{Sharp-LowerBounds-pLap} first appeared in \cite{HijaziOussama2020TCCo}, where it was proven with a different technique; see Theorem 9 there. The fact that their assumptions are equivalent to ours is the content of Theorem 1.1 in \cite{Perez-AyalaSamuel2025HaCc}. Finally, the key idea for our argument is the availability in these spaces of a special ``cut-off'' function, which we have already utilized in \cite{Perez-AyalaSamuel2025HaCc}, together with key observations also made in \cites{DuMao,LinHezi2025Sgef} and later, and more generally, in \cite{FarkasKajantoKristaly2025}. Theorem \ref{Sharp-LowerBounds-Clamped&Buckling} below also uses this special ``cut-off'' function, showing the universality of our methods.

We will now discuss the submanifold case, i.e. estimate (\ref{Sharp-LB-pLap-Sub}). In the work by Du--Mao \cite{DuMao}, it was proven that for the same class of submanifolds, if the ambient space $(X^{n+1},g_+) = \Hyp$ is hyperbolic space, then $\lambda_{1,p}(Y^{k+1})\ge \left(\frac{k-\alpha}{p}\right)^p$. We understand (\ref{Sharp-LB-pLap-Sub}) as a generalization of Du--Mao's result, where the error term $\hat\beta^Y$ measures, in a way, the deviation from the ambient manifold from being $\Hyp$. In fact, the invariant $\hat \beta^Y$ was introduced in \cite{PerezAyalaTyrrell}, where it was proven that $\hat \beta^Y = 0$ if the ambient space is hyperbolic space; see the proof of Corollary 1.15 there. In an upcoming work, the authors and collaborators use this invariant to study, among other things, the nonexistence of closed minimal submanifolds within a large class of CC spaces.

We obtain a similar result for the other two eigenvalue problems.

\begin{theorem}\label{Sharp-LowerBounds-Clamped&Buckling}
   Let $(X^{n+1},g_+)$ be a weakly Poincar\'e-Einstein manifold with $\Ric_{g_+}\ge -ng_+$, and assume that $Y(\partial X, [g_+]_\infty)\ge 0$.
   \begin{enumerate}
   \item Then 
   \begin{equation}
   \Gamma^l_1(X^{n+1},g_+) = \left(\frac{n}{2}\right)^{2l}\;\;\;\;\text{and}\;\;\;\; \Lambda_1(X^{n+1},g_+)=\frac{n^2}{4}.
   \end{equation}
   \item Let $\iota: Y^{k+1}\to X^{n+1}$ be a complete and noncompact immersion into $(X^{n+1},g_+)$ whose mean curvature satisfies $|H^Y|\le \alpha$ for some constant $\alpha$. If the submanifold invariant $\hat \beta^Y$ satisfies $\alpha+\hat \beta^Y< k$, then  
   \begin{equation}\label{Sharp-LB-Clamped-Sub}
       \Gamma^l_1(Y^{k+1})\ge \left(\frac{k-\alpha - \hat \beta^Y}{2}\right)^{2l}\;\;\;\; \text{and}\;\;\;\; \Lambda_1(Y^{k+1})\ge \left(\frac{k-\alpha - \hat\beta^Y}{2}\right)^2.
   \end{equation}
   \end{enumerate}
\end{theorem}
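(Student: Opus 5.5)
The upper bounds in part (1) are supplied by Theorem \ref{Sharp-UpperBounds}, so both parts reduce to lower bounds on bounded smooth domains $\Omega$. Each lower bound will be obtained from the $p=2$ case of Theorem \ref{Sharp-LowerBounds-pLap} together with elementary inequalities between Rayleigh quotients. Write $\mu:=\lambda_{1,2}$, with $\mu = n^2/4$ on $X$ and $\mu\ge \left(\frac{k-\alpha-\hat\beta^Y}{2}\right)^2$ on $Y$. These values follow from Theorem \ref{Sharp-LowerBounds-pLap} with $p=2$ and the hypothesis $\alpha+\hat\beta^Y<k$.

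The key intermediate claim is this: for every $f\in C^\infty_0(\Omega)$,
\begin{equation*}
\int_\Omega |\nabla f|^2 \ge \mu \int_\Omega f^2
\quad\text{and}\quad
\int_\Omega (\Delta f)^2 \ge \mu\int_\Omega|\nabla f|^2 .
\end{equation*}
The first inequality is just the definition of $\lambda_{1,2}$ applied to $f$. For the second, integrate by parts and apply Cauchy--Schwarz:
\[
\int|\nabla f|^2=-\int f\Delta f\le \|f\|_2\|\Delta f\|_2\le \mu^{-1/2}\|\nabla f\|_2\|\Delta f\|_2 .
\]
Dividing by $\|\nabla f\|_2$ and squaring gives $\int (\Delta f)^2\ge\mu\int|\nabla f|^2$. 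This already yields the buckling bound $R^B(f)\ge \mu$, and taking infima over $\Omega$ gives $\Lambda_1\ge\mu$.

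For the clamped problem I will iterate these two inequalities. Since $f\in C_0^\infty(\Omega)$, all the functions $\Delta^j f$ are also compactly supported in $\Omega$, so the inequalities can be applied to $\Delta^j f$ as well. Combining them gives
\[
\int(\Delta^{j+1}f)^2\ge\mu\int|\nabla\Delta^j f|^2\ge\mu^2\int(\Delta^j f)^2 .
\]
Iterating from $j=0$ to $m-1$ gives $\int(\Delta^m f)^2\ge\mu^{2m}\int f^2$, which handles even $l=2m$. For odd $l=2m+1$, one further application of the first inequality to $\Delta^m f$ gives $\int|\nabla\Delta^m f|^2\ge\mu\int(\Delta^m f)^2\ge\mu^{2m+1}\int f^2$. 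In both cases $R^C_l(f)\ge\mu^l$, which equals $\left(\frac{n}{2}\right)^{2l}$ on $X$ and is at least $\left(\frac{k-\alpha-\hat\beta^Y}{2}\right)^{2l}$ on $Y$.

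Next I pass from $C_0^\infty(\Omega)$ to $W^{2,l}_0(\Omega)$ by density, since the Rayleigh quotients are continuous in the Sobolev norm. Taking the infimum over all domains then gives the lower bounds, and combining them with Theorem \ref{Sharp-UpperBounds} gives the equalities in part (1). No ambient upper bound is needed for part (2).

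The main obstacle is the spectral input $\mu$ itself, namely the $p=2$ case of Theorem \ref{Sharp-LowerBounds-pLap}, especially on the submanifold $Y$. If that theorem is taken as given, the rest is routine. If instead one wants to argue directly, I would use the special cut-off function $u$ with $|\nabla u|\le 1$ and $\Delta u\ge n$ (respectively $\ge k-\alpha-\hat\beta^Y$ on $Y$). The standard argument integrates $f^2\Delta u$ by parts and applies Cauchy--Schwarz, which gives $\int|\nabla f|^2\ge \frac{c^2}{4}\int f^2$ with $c$ the lower bound for $\Delta u$.
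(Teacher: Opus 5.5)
Your proof is correct and gives the same constants as the paper, but it gets there by a different route.

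\textbf{The paper's route.} It takes $\rho=\ln u$, so that $|\nabla\rho|\le 1$ and $\Delta\rho\ge b$. For the even-order clamped step it cites Lin's Rellich-type inequality (\ref{Lin1}), which for $p=2$ reads $\int|\Delta f|^2\ge (b^2/4)^2\int f^2$. For buckling it cites Lin's second inequality, $\int|\Delta f|^2\ge \frac{b^2}{4}\int|\nabla f|^2$. The Poincar\'e inequality (\ref{PI-Manifold}), applied to $|\Delta^m f|$, is used only for the final gradient step when $l$ is odd.

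\textbf{Your route.} You use the geometry exactly once, through the $p=2$ case of Theorem \ref{Sharp-LowerBounds-pLap}. You then derive both the buckling inequality and the iterated clamped inequality by integration by parts and Cauchy--Schwarz. In effect you prove the geometry-free facts $\lambda_1^B((-\Delta)^2,\Omega)\ge \lambda_{1,2}(\Omega)$ and $\lambda_1^C((-\Delta)^l,\Omega)\ge \lambda_{1,2}(\Omega)^l$ on every domain of every Riemannian manifold. Since your $\mu$ equals the paper's $b^2/4$, your bounds $\mu^l$ and $\mu$ agree exactly with the paper's.

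\textbf{What each buys.} Your argument is more self-contained, with no appeal to Lin's theorems. It also shows that the clamped and buckling bounds hold wherever an $L^2$ Poincar\'e inequality holds, whether or not a function $\rho$ is available. The paper's route, built directly on $\rho$, additionally yields the $L^p$ inequalities (\ref{Lin1})--(\ref{Lin3}) for all $p>1$. Your Hilbert-space argument does not give these for $p\neq 2$.

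\textbf{A small slip.} In your closing aside, the function with gradient at most $1$ and Laplacian at least $n$ is $\rho=\ln u$, not the Lee eigenfunction $u$ itself. The eigenfunction satisfies $|\nabla u|<u$ and $\Delta u=(n+1)u$.
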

These results are new for this class of spaces. Thanks to Theorem \ref{Sharp-UpperBounds}, this result is essentially about the lower bounds
\begin{equation}\label{LowerBounds-CB}
    \Gamma_1^l(X^{n+1},g_+)\ge \left(\frac{n}{2}\right)^{2l} \;\;\;\; \text{and}\;\;\;\; \Gamma_1(X^{n+1},g_+)\ge \frac{n^2}{4}
\end{equation}
We would like to mention some prior results in this direction. On simply connected manifolds with sectional curvatures bounded above, everywhere, by $-1$, estimate (\ref{LowerBounds-CB}) for the clamped eigenvalue with respect to the biharmonic operator ($l=2$) was shown in \cite{KristalyAlexandru2020Ftoc} under the assumption that the so called ``$\kappa$-Cartan--Hadamard conjecture'' holds; see Theorem 1.1 and Section 2.2 there. Later in  \cite{FarkasKajantoKristaly2025}, both estimates in (\ref{LowerBounds-CB}) were shown under the assumption that the manifold is simply connected with sectional bounded curvatures bounded above by $-1$; see Theorem 4.1 and Theorem 1.2 there. Our result is on manifolds that are not necessarily simply connected and whose sectional curvatures might change sign somewhere.

The results for submanifolds are the same in spirit as those for submanifolds for the $p$-Laplace operator. In the work of Lin \cite{LinHezi2025Sgef}, the author obtained similar results when the ambient manifold is simply connected, noncompact and with sectional curvatures bounded above everywhere by $-\kappa^2$ ($\kappa>0$), but with no $\hat \beta^Y$ appearing; see Theorems 4.2 and Theorem 4.3 there. We remark once again that our ambient manifold is not simply connected and that its sectional curvatures might change sign somewhere, even though at the boundary they approach $-1$ uniformly. Notably, it is worth noting that their condition on the sectional curvatures implies the opposite bound on the Ricci curvature.

\begin{remark}
    As mentioned, it was proven in \cite{PerezAyalaTyrrell} that $\hat \beta^Y = 0$ for any noncompact immersed submanifold of $\Hyp$. It would be interesting to see if this result can be generalized to any simply connected manifold with sectional curvatures bounded above by $-1$ everywhere.
\end{remark}

\section{Preliminaries}

Our techniques rely on a delicate analysis of the asymptotic expansions of several geometric quantities on an asymptotically hyperbolic manifold. In this section, we will compute some of the necessary expansions. 

Let us suppose that $(X^{n+1},g_+)$ is an AH manifold. As explained in the introduction, by a result of Graham-Lee \cite{GrahamLee}, there exists a special defining function $r$ for $\partial X$ such that $|dr|^2_{r^2 g_+}\equiv 1$ on a neighborhood of $\partial X$ in $X^{n+1}$. This neighborhood can be identified with $\partial X\times [0,\E_r)$, $\E_r>0$, and the metric can be written in normal form as in (\ref{NormalForm}). On $\partial X\times [0,\E_r)$, we thus have
\[
   dv_{g_+}=r^{-n-1} \bigg(\frac{\mathrm{det}g_r}{\mathrm{det} g_0}\bigg)^{1\slash  2} dv_{\hat g} dr,
\]
where 
\[
\bigg(\frac{\mathrm{det}g_r}{\mathrm{det}g_0}\bigg)^{1\slash  2}=1+(-H^{\partial X})r+O(r^2).
\]
Here, $H^{\partial X}$ is the mean curvature of $\partial X$ with respect to $\overline{g}$; see Section 2 in \cite{PerezAyalaTyrrell}. Importantly, $dv_{g_+}= r^{-n-1}(1+O(r))dv_{\hat g}dr$ as $r\to0^+$.

Our cutoff function involves the special defining function $r$, and so the following expansions will be needed. We start with the expansion of the differential operator $\Delta_{g_+}$.

\begin{lemma}\label{ExpansionLaplacian}
Suppose $(X^{n+1}, g_+)$ is an AH manifold and $r$ is a special defining function for it. In normal form $\bar g = r^2g_+ = dr^2 + g_r$ in a collar neighborhood of $\partial X$, we write
\begin{equation}
    g_r = \hat g + g_{(1)}\cdot r + g_{(2)}\cdot r^2 + \cdots
\end{equation}
for the expansion of $g_r$. Then 
\begin{equation}
    \Delta_{g_+} = -(n-1)r\cdot \partial_r + r^2\cdot\left(\Delta_{\hat g} + \partial^2_{rr} + \frac{1}{2}\tr_{\hat g}(g_{(1)})\partial_r\right) +  O(r^3)E,
\end{equation}
where $E$ is a differential operator that is first order in the radial direction and second order in the tangential directions.
\end{lemma}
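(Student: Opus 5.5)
The plan is to treat $g_+ = r^{-2}\bar g$ as a conformal change of the compactified metric, and then expand the Laplacian of the normal-form metric $\bar g = dr^2 + g_r$ in powers of $r$.

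\emph{Step 1: conformal change.} Recall that for a metric $\tilde g = e^{2w}\bar g$ on a manifold of dimension $N$ one has
\[
\Delta_{\tilde g} f = e^{-2w}\bigl(\Delta_{\bar g} f + (N-2)\,\bar g(\nabla_{\bar g} w, \nabla_{\bar g} f)\bigr).
\]
This follows directly from $\Delta f = |g|^{-1/2}\partial_i(|g|^{1/2}g^{ij}\partial_j f)$, using $|\tilde g|^{1/2} = e^{Nw}|\bar g|^{1/2}$. We apply it with $N = n+1$ and $w = -\log r$, so $e^{-2w} = r^2$ and $\nabla_{\bar g} w = -r^{-1}\nabla_{\bar g} r$. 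Since $r$ is a special defining function, $|dr|_{\bar g} \equiv 1$ on the collar, and in normal form $\nabla_{\bar g} r = \partial_r$. This yields the exact identity
\[
\Delta_{g_+} = r^2 \Delta_{\bar g} - (n-1)\, r\,\partial_r
\]
on $\partial X \times (0, \epsilon_r)$. This already produces the first-order term of the lemma with no error.

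\emph{Step 2: Laplacian of the normal form.} In the coordinates $(r, y)$ of \eqref{NormalForm}, the metric $\bar g$ is block diagonal. With $\sqrt{\det \bar g} = \sqrt{\det g_r}$, the coordinate formula gives
\[
\Delta_{\bar g} = \partial_r^2 + \tfrac12 \tr_{g_r}(\partial_r g_r)\,\partial_r + \Delta_{g_r},
\]
where $\partial_r \log\sqrt{\det g_r} = \tfrac12 \tr_{g_r}(\partial_r g_r)$ and $\Delta_{g_r}$ is the Laplacian of $g_r$ on the slice, acting only in the tangential variables.

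\emph{Step 3: expand in $r$.} Insert $g_r = \hat g + g_{(1)} r + O(r^2)$. This gives $g_r^{-1} = \hat g^{-1} + O(r)$ and $\partial_r g_r = g_{(1)} + O(r)$, so
\[
\tfrac12 \tr_{g_r}(\partial_r g_r) = \tfrac12 \tr_{\hat g} g_{(1)} + O(r).
\]
Likewise $\Delta_{g_r} = \Delta_{\hat g} + O(r)E'$, with $E'$ a tangential operator of order at most two whose coefficients are bounded. This uses that the metric is at least $C^{3,\alpha}$, so $g_r$ and its first tangential derivatives are $C^1$ in $r$. Multiplying by $r^2$, all remainders become $O(r^3)$ times an operator that is first order in $\partial_r$ and at most second order tangentially, which we call $E$. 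Combining with Step 1 gives the claimed expansion.

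\emph{Main obstacle.} The only delicate point is bookkeeping the regularity, so that the $O(r)$ remainders are uniform with controlled coefficients. I would justify this from the $C^{3,\alpha}$ assumption on $\bar g$ by Taylor's theorem in $r$ with uniform bounds on the compact $\partial X$. I would also double-check that the sign and normalization of $\tfrac12 \tr_{\hat g} g_{(1)}$ match the volume expansion $1 - H^{\partial X} r$ stated above, which identifies this coefficient as $-H^{\partial X}$.
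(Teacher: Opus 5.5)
Your proof is correct, and it organizes the computation differently from the paper. The paper expands $\Delta_{g_+} = g_+^{ij}\bigl(\partial^2_{ij} - \Gamma^k_{ij}(g_+)\partial_k\bigr)$ directly. It uses $g_+^{\alpha\beta} = r^2(\hat g^{\alpha\beta} - g_{(1)}^{\alpha\beta} r + O(r^2))$, $\Gamma^r_{\alpha\beta}(g_+) = r^{-1}\hat g_{\alpha\beta} + \tfrac12 (g_{(1)})_{\alpha\beta} + O(r^2)$, $\Gamma^r_{rr}(g_+) = -r^{-1}$ and $\Gamma^\gamma_{\alpha\beta}(g_+) = \Gamma^\gamma_{\alpha\beta}(\hat g) + O(r)$. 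In that bookkeeping the term $-(n-1)r\partial_r$ appears only after $-nr\partial_r$ (from the tangential trace of $\Gamma^r_{\alpha\beta}$) is combined with $+r\partial_r$ (from $\Gamma^r_{rr}$). Likewise, the coefficient $\tfrac12\tr_{\hat g}g_{(1)}$ arises as $-\tfrac12\tr_{\hat g}g_{(1)} + \tr_{\hat g}g_{(1)}$, the second piece coming from the first-order correction to the inverse metric.

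You instead use the conformal transformation law together with $|dr|_{\bar g}\equiv 1$ to obtain the exact identity $\Delta_{g_+} = r^2\Delta_{\bar g} - (n-1)r\partial_r$ on the collar. You then use the divergence form of $\Delta_{\bar g}$ for the block-diagonal metric $dr^2 + g_r$. This buys cleaner error control. The first-order term carries no remainder at all, and the only approximations are in $\tfrac12\tr_{g_r}(\partial_r g_r)$ and $\Delta_{g_r}$. It is also immediate that the coefficient of $r^2\partial_r$ is $\partial_r\log(\det g_r/\det\hat g)^{1/2}$ at $r=0$, i.e.\ $-H^{\partial X}$, consistent with the volume expansion stated just before the lemma.

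The paper's route needs no transformation law, but it hides these cancellations inside the Christoffel symbols. Your regularity caveat is at the same level of rigor as the paper, which likewise takes the Taylor expansion of $g_r$ for granted.
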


\begin{remark}
    The coefficient tensors $g_{(1)}$, $g_{(2)}$, and $g_{(3)}$ have been computed by many researchers, see \cite{Perez-AyalaSamuel2025HaCc} and the references therein. However, an explicit expression is not needed here. 
\end{remark}

\begin{proof}
    We have 
\[
    \begin{split}
    \Delta_{g_+} &= g_+^{\A\B}(\partial^2_{\A\B} - \Gamma_{\A\B}^r(g_+)\partial_r - \Gamma_{\A\B}^\G(g_+)\partial_\G) + r^2(\partial^2_{rr} - \Gamma_{rr}^r(g_+)\partial_r - \Gamma_{rr}^\A(g_+)\partial_\A) \\ &= r^2(\hat g^{\A\B} - g_{(1)}^{\A\B}\cdot r + O(r^2))\times\\&\hspace{1in}\left(\partial^2_{\A\B} - \left(\hat g_{\A\B}r^{-1}+\frac12 (g_{(1)})_{\A\B} + O_{\A\B}(r^2)\right)\partial_r - \left(\Gamma_{\A\B}^{\G}(\hat g) + O_{\A\B}^{\G}(r)\right)\partial_\G\right) \\&\;\;\;\;+r^2(\partial^2_{rr} - (-r^{-1})\partial_r) \\ &= -(n-1)r\cdot \partial_r+ r^2\cdot\left(\Delta_{\hat g} +\partial^2_{rr} - \frac12\hat g^{\A\B}(g_{(1)})_{\A\B}\partial_r +g^{\A\B}_{(1)}\hat g_{\A\B}\partial_r\right)  +  O(r^3)E,
    \end{split}
\]  
from which the result follows. 
\end{proof}

We proceed with the computation of the polyharmonic operator $(-\Delta_{g_+})^l$ acting on $r$.

\begin{lemma}\label{Delta^m-on-r}
Let $r$ be a special defining function. Then
\begin{equation}
    \Delta^l_{g_+}r =(-(n-1))^l\cdot r +O(r^2),
\end{equation}
for $l\in \mathbb{N}$ as $r\to0^+$.
\end{lemma}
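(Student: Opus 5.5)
Since $\Delta^l_{g_+} r$ is just the $l$-fold iterate of $\Delta_{g_+}$, I will argue by induction on $l$ using the expansion of Lemma \ref{ExpansionLaplacian}. The statement must be read as an asymptotic statement in the collar $\partial X\times[0,\E_r)$: the remainder $O(r^2)$ means a function bounded by $Cr^2$ there, and each error term comes with enough tangential regularity (the metric is $C^{3,\alpha}$, so the coefficients of $g_r$ are smooth enough in $r$ and in the boundary variables) that the operator $O(r^3)E$ can be applied to it again.

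\textbf{Base case $l=1$.} Apply Lemma \ref{ExpansionLaplacian} to $f=r$. We have $\partial_r r=1$, $\partial^2_{rr}r=0$, and $\Delta_{\hat g}r=0$, because $r$ is constant along $\partial X$ in the collar coordinates. Hence
\[
\Delta_{g_+} r = -(n-1)r + \tfrac12 \tr_{\hat g}(g_{(1)})\, r^2 + O(r^3)E(r).
\]
The operator $E$ is first order in the radial direction and has no tangential derivatives acting on $r$, so $E(r)$ is bounded and the last term is $O(r^3)$. This gives $\Delta_{g_+}r=-(n-1)r+O(r^2)$.

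\textbf{Inductive step.} I will prove a slightly stronger statement, namely that
\[
\Delta^l_{g_+}r = (-(n-1))^l r + r^2\phi_l(y,r),
\]
where $\phi_l$ is bounded and has bounded first and second tangential derivatives and first radial derivative on the collar. (Alternatively, one can record that $\phi_l$ is a polynomial in $r$ plus a remainder whose coefficients are built from $g_{(j)}$.) Applying $\Delta_{g_+}$, linearity gives $(-(n-1))^l\Delta_{g_+}r$, which the base case handles and which produces $(-(n-1))^{l+1}r+O(r^2)$. The remaining work is to show that $\Delta_{g_+}(r^2\phi_l)=O(r^2)$. By Lemma \ref{ExpansionLaplacian}, this splits into three pieces:
\[
-(n-1)r\,\partial_r(r^2\phi_l) = O(r^2),
\]
\[
r^2\Bigl(\Delta_{\hat g}+\partial^2_{rr}+\tfrac12\tr_{\hat g}(g_{(1)})\partial_r\Bigr)(r^2\phi_l),
\]
and the term $O(r^3)E(r^2\phi_l)$. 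The second piece is $O(r^2)$ as long as $\partial^2_{rr}(r^2\phi_l)$ is bounded, and the third is $O(r^3)$.

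\textbf{Main obstacle: regularity bookkeeping.} The difficulty is not the algebra but keeping track of regularity. Each application of $\Delta_{g_+}$ costs up to two derivatives of $\phi_l$, including $\partial^2_{rr}$, while the metric is only assumed $C^{3,\alpha}$ up to the boundary. I would handle this in one of two ways:
\begin{enumerate}
\item Work in the smooth case, or note that only finitely many derivatives of the $g_{(j)}$ enter, which is what the lemma is used for later.
\item Observe that $\Delta_{g_+}$ preserves the class of functions of the form $r\cdot(\text{function smooth up to }r=0)$, using the exact formula $\Delta_{g_+} = r^2\Delta_{\bar g} - (n-1)r\,\bar g(\nabla r,\cdot)$ derived from conformal change. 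With $\bar g=dr^2+g_r$, this is $r^2\Delta_{\bar g}-(n-1)r\partial_r$.
\end{enumerate}
The second route is cleaner. Writing $\Delta^l_{g_+}r = r\,u_l$ with $u_l$ regular up to the boundary, one computes
\[
u_{l+1} = -(n-1)\bigl(u_l + r\partial_r u_l\bigr) + r\,\Delta_{\bar g}(r u_l).
\]
Evaluating at $r=0$ gives $u_{l+1}|_{r=0}=-(n-1)\,u_l|_{r=0}$. Since $u_1|_{r=0}=-(n-1)$, this yields $u_l|_{r=0}=(-(n-1))^l$, and therefore $\Delta^l_{g_+}r=(-(n-1))^l r+O(r^2)$.
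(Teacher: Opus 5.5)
Your proposal is correct and is essentially the paper's argument. The paper also inducts on $l$ using Lemma \ref{ExpansionLaplacian}: it applies $\Delta_{g_+}$ to $(-(n-1))^k r + O(r^2)$ and simply asserts that the Laplacian of the $O(r^2)$ remainder is again $O(r^2)$. Your recursion for $u_l = r^{-1}\Delta^l_{g_+} r$, via the exact identity $\Delta_{g_+} = r^2\Delta_{\bar g} - (n-1)r\,\partial_r$, is just the untruncated form of that lemma. It usefully makes explicit the derivative control on the remainder that the paper uses without comment, which, as you note, requires more than $C^{3,\alpha}$ regularity once $l$ is large.
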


\begin{proof}
    Let us compute as follows. From Lemma \ref{ExpansionLaplacian}, we have
    \[
    \Delta_{g_+}r =-(n-1)\cdot r +\frac12\tr_{\hat g}(g_{(1)})\cdot r^2 + O(r^3) = -(n-1)\cdot r + O(r^2).
    \]
    Assume that it holds for $l=k\in\mathbb{N}$. Applying the Laplacian operator to $\Delta_{g_+}^kr$, we obtain
    \[
    \begin{split}
        \Delta^{k+1}_{g_+}r &= -(n-1)r^k\cdot \left\{-(n-1)\cdot r+ O(r^2)\right\} +(r^2),
    \end{split}
    \]
    and so the result follows by induction. 
\end{proof}

We observe that, in the case of the polyharmonic operator $(-\Delta_{g_+})^l$ with $l$ odd, the numerator in the Rayleigh quotient (\ref{Rayleigh-Quotient-B}) is of the form $|\nabla_{g_+}\Delta^m_{g_+}r|^2$ for $m\in\mathbb{N}$. This motivates our next lemma.
\begin{lemma}\label{Nabla-Delta^m-on-r}
    Let $r$ be a special defining function. Then 
    \begin{equation}
        \nabla_{g_+}\Delta^m_{g_+}r = (-(n-1))^mr^2\partial_r + O(r^3)E,
    \end{equation}
    where $E$ is a linear differential operator. Moreover,
    \begin{equation}
        |\nabla_{g_+}\Delta_{g_+}^mr|^2 = (n-1)^{2m}r^2 + O(r^3).
    \end{equation}
\end{lemma}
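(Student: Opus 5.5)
Starting from \cref{Delta^m-on-r}, I write $u:=\Delta^m_{g_+}r$ in the collar $\partial X\times[0,\E_r)$. That lemma gives $u=(-(n-1))^m r+O(r^2)$. I will read the remainder as a function $\phi(x,r)=O(r^2)$ that is smooth in the tangential variables and whose derivatives obey $\partial_r\phi=O(r)$ and $\partial_\A\phi=O(r^2)$. This is legitimate because the remainder is built by applying the operator of \cref{ExpansionLaplacian} repeatedly to $r$. Every application either multiplies by a power of $r$ or differentiates coefficients that are smooth up to the boundary, given the $C^{3,\alpha}$ regularity of $\bar g$. The point to check is that differentiating a term of size $O(r^j)$ in $r$ costs at most one power of $r$, and differentiating it tangentially costs none. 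I would verify this inductively alongside the induction in \cref{Delta^m-on-r}, by tracking that the $O(r^j)$ terms there have the form $r^j\psi(x,r)$ with $\psi$ bounded together with its $r\partial_r$ and $\partial_\A$ derivatives.

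Next I compute the gradient in normal form. Since $g_+=r^{-2}(dr^2+g_r)$, we have $g_+^{rr}=r^2$, $g_+^{r\A}=0$ and $g_+^{\A\B}=r^2g_r^{\A\B}$. Hence
\[
\nabla_{g_+}u = r^2(\partial_r u)\partial_r + r^2 g_r^{\A\B}(\partial_\B u)\partial_\A .
\]
Substituting $\partial_r u=(-(n-1))^m+O(r)$ and $\partial_\A u=O(r^2)$ gives
\[
\nabla_{g_+}u=(-(n-1))^m r^2\partial_r + O(r^3)\partial_r + O(r^4)\partial_\A .
\]
Both remainder pieces can be gathered into $O(r^3)E$, where $E$ is a first-order linear operator with bounded coefficients. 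This proves the first claim.

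For the norm I use $|\nabla_{g_+}u|^2_{g_+}=g_+^{ij}\partial_iu\,\partial_ju = r^2(\partial_ru)^2+r^2g_r^{\A\B}\partial_\A u\,\partial_\B u$. The first term equals $r^2\bigl((n-1)^{2m}+O(r)\bigr)=(n-1)^{2m}r^2+O(r^3)$. The second is $O(r^2\cdot r^4)=O(r^6)$. Together these give $|\nabla_{g_+}\Delta^m_{g_+}r|^2=(n-1)^{2m}r^2+O(r^3)$.

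The main obstacle is the first step. \cref{Delta^m-on-r} only records $O(r^2)$ as a size bound, whereas this argument needs derivative control of the remainder: $\partial_r$ of it must be $O(r)$. I would handle this by strengthening the induction hypothesis of \cref{Delta^m-on-r} to the statement $\Delta^l_{g_+}r=(-(n-1))^l r+r^2\psi_l$, with $\psi_l$ in the class of functions that are $C^1$ up to $r=0$. \cref{ExpansionLaplacian} maps this class into itself after factoring out $r^2$: the terms $r\partial_r$ and $r^2\partial^2_{rr}$ acting on $r^2\psi$ produce $r^2\times(\text{bounded})$ provided enough derivatives of $\psi$ are bounded. This requires $C^{2m+1}$-type regularity of $g_r$, or else the conformal-compactness regularity assumption combined with a b-calculus (edge-smooth) description. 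If the regularity is insufficient, I would instead argue only for the leading coefficient, using a weighted ($r\partial_r$, $r\partial_\A$) Hölder framework in which $O(r^2)$ remainders are automatically stable under b-derivatives. That framework yields $r\partial_r(r^2\psi)=O(r^2)$, which is exactly what is needed.
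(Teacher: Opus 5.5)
Your argument is correct and is essentially the paper's proof: both write $\nabla_{g_+}u = r^2(\partial_r u)\partial_r + r^2 g_r^{\alpha\beta}(\partial_\beta u)\partial_\alpha$ for $u=\Delta^m_{g_+}r$, substitute the expansion from the preceding lemma, and read off the leading term $(-(n-1))^m r^2\partial_r$ and the norm, with the tangential contribution being $O(r^6)$. The one difference is that the paper differentiates the $O(r^2)$ remainder term by term without comment, whereas you make explicit the derivative control this tacitly needs, namely $\partial_r(r^2\psi)=O(r)$ and $\partial_\alpha(r^2\psi)=O(r^2)$, and you propose to supply it through the strengthened induction $\Delta^l_{g_+}r=(-(n-1))^l r+r^2\psi_l$; you also correctly note that for large $m$ this uses more regularity than the standing $C^{3,\alpha}$ assumption, a caveat that applies equally to the paper's argument.
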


\begin{proof}
    From Lemma \ref{Delta^m-on-r}, we have
    \[
    \begin{split}
        \nabla_{g_+}\Delta^m_{g_+}r &= \nabla_{g_+}\left((-(n-1))^m\cdot r +O(r^2)\right) \\ &= g_+^{\A\B}\partial_\A((-(n-1))^m\cdot r +O(r^2))\partial_\B+r^2\partial_r((-(n-1))^m\cdot r +O(r^2))\partial_r \\&= O^\B(r^3)\partial_\B + (-(n-1))^m r^2\partial_r + O(r^3)\partial_r,
    \end{split}
    \]
    as desired. Furthermore, 
    \[
    \begin{split}
        |\nabla_{g_+}\Delta^m_{g_+}r|^2 &= g_+^{\A\B}\partial_\A(\Delta^m_{g_+}r)\partial_\B(\Delta^m_{g_+}r) + r^2(\partial_r(\Delta^m_{g_+}r))^2  \\ &= O(r^6) + r^2((-(n-1))^{m}+ O(r))^2,
    \end{split}
    \]
    from which the result follows. 
\end{proof}

\subsection{Family of test functions}

In this section, we introduce the family of test-functions that we will use to derive upper bounds for the clamped and buckling eigenvalues. Additionally, we will derive the needed asymptotic estimates. 

Let $r$ be a special defining function, and choose $\epsilon_r>0$ such that $|dr|^2_{r^2g_+}\equiv 1$ on $\{p\in X: r(p)\le \epsilon_r\}$ - see the discussion at the beginning of the current section. This is equivalent to having $|\nabla_{g_+}r|^2\equiv r^2$ on $\{p\in X: r(p)\le \epsilon_r\}$. For $\epsilon\in (0,\epsilon_r)$ and $\delta>0$, consider the cutoff function 
    \begin{equation}\label{CutOff-Function}
\phi_{\epsilon\delta}(p):=\begin{cases} 
      1, & \quad\epsilon\leq r(p) \le \E_r \\
      \frac{1+\delta}{\epsilon}(r(p)-\frac{\epsilon\delta}{1+\delta}), & \frac{\epsilon\delta}{1+\delta}\leq r(p)\leq \epsilon \\
      0, & \quad 0\leq r(p)\leq   \frac{\epsilon\delta}{1+\delta}.
   \end{cases}
\end{equation}
We also require $\phi_{\E\D}(p) \equiv 0$ for $r(p)\ge A$, where $A>\epsilon_r$. How we choose $\phi_{\E\D}$ to decay to zero in this region does not affect the computations. This cutoff function was introduced in \cite{PerezAyalaTyrrell} to derive upper bounds for the $p$-Laplacian operator on asymptotically hyperbolic manifolds. The family of test functions we consider is $r^{s}\phi_{\E\D}$, where $s\in (\frac{n-1}{2},\frac{n}{2})$. 

Our goal is to compute both $R^C_l(r^s\phi_{\E\D})$ and $R^B(r^s\phi_{\E\D})$. This is the motivation for the lemmas within this section. In what follows, we say that a function $w$ is $O(a,b) = O(a(\epsilon), b(\delta))$ if there exist positive constants $A$, $B$, $\epsilon_o$ and $\delta_o$ such that $|w(\epsilon,\delta)|\le A|a(\epsilon)|$ and $|w(\epsilon,\delta)|\le B|b(\delta)|$ for all $0\le \epsilon<\epsilon_o$ and for all $0\le \delta<\delta_o$. $O(a,b,c)$ is defined similarly.

\begin{lemma}\label{CutOff-Estimates}
    Let the setup be as in Lemma \ref{ExpansionLaplacian} and let $m\in \mathbb{N}\cup \{0\}$. For any $s>0$, we have
    \begin{equation}\label{CutOff-Estimates1}
        \Delta^m_{g_+}(r^s\phi_{\E\D}) = 
    \begin{cases}
    \frac{1+\D}{\E}(s+1)(s+n-1)r^{s+1} +O(r^s,\D) & \;\text{ if }\;m=1\\
    \frac{1+\D}{\E}(s+1)^m(s+1-n)^mr^{s+1} + O(r^{s+2},\E^{-1}) + O(r^s,\D) &\;\text{ if }\;m>1,
    \end{cases}
    \end{equation}
    while
    \begin{equation}\label{CutOff-Estimates2-1}
         |\nabla_{g_+}\Delta_{g_+}(r^s\phi_{\E\D})|^2 = \left(\frac{1+\D}{\E}\right)^2(s+1)^4(s+1-n)^2r^{2s+2} +O(r^{2s+1},\E^{-1},\D)+O(r^{2s},\D^2),
    \end{equation}
    and, if $m>1$,
    \begin{equation}\label{CutOff-Estimates2}
    \begin{split}
    |\nabla_{g_+}\Delta_{g_+}^m(r^s\phi_{\E\D})|^2 &= \left(\frac{1+\D}{\E}\right)^2(s+1)^{2m+2}(s+1-n)^{2m}r^{2s+2}+O(r^{2s+3},\E^{-2})\\&\;\;\;\;+ O(r^{2s+1},\E^{-1},\D) + O(r^{2s+2},\E^{-1},\D)+O(r^{3s+2},\E^{-2},\D) +O(r^{2s},\D),
    \end{split}
    \end{equation}
    on $\left\{p\in X: \frac{\E\D}{1+\D}<r(p)<\E\right\}$. Additionally, (\ref{CutOff-Estimates2}) holds for $m=0$ but only with the $O(r^{2s},\D)$ term. On the other hand, 
    \begin{equation}\label{CutOff-Estimates3}
       \Delta_{g_+}^m(r^s) = s^m(s-n)^mr^s + O(r^{s+1})
    \end{equation}
    and
    \begin{equation}\label{CutOff-Estimates4}
       |\nabla_{g_+}\Delta^m_{g_+}(r^s)|^2 = s^{2m+2}(s-n)^{2m}r^{2s} + O(r^{2s+1}) 
    \end{equation}
    as $r\to 0^+$. Furthermore, (\ref{CutOff-Estimates4}) holds for $m=0$ with no Big-O term.
\end{lemma}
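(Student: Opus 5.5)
The plan is to reduce everything to the action of $\Delta_{g_+}$ on pure powers $r^a$ and then use linearity on the transition region. The basic computation comes from Lemma \ref{ExpansionLaplacian}. For any real $a$ and any smooth function $u$ on the collar, that lemma gives
\begin{equation*}
\Delta_{g_+}(r^a u)=a(a-n)\,r^a u+O(r^{a+1}),
\end{equation*}
because $-(n-1)r\partial_r r^a+r^2\partial^2_{rr}r^a=\bigl(-(n-1)a+a(a-1)\bigr)r^a=a(a-n)r^a$. All other terms carry an extra factor of $r$: the term $\tfrac12\tr_{\hat g}(g_{(1)})r^2\partial_r$, the tangential Laplacian $r^2\Delta_{\hat g}$, the derivatives falling on $u$, and the $O(r^3)E$ remainder.

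I would first prove (\ref{CutOff-Estimates3}) by induction on $m$, with the strengthened hypothesis that the remainder has the form $r^{s+1}v$ for $v$ a function on the collar with bounded derivatives. This closed form is exactly what allows the basic identity to be applied again at the next step. Then (\ref{CutOff-Estimates4}) follows as in Lemma \ref{Nabla-Delta^m-on-r}. Since $|\nabla_{g_+}f|^2=r^2(\partial_rf)^2+r^2|d_{\partial X}f|^2_{g_r}$, the tangential part of $\Delta^m_{g_+}(r^s)$ contributes $O(r^{2s+2})$. The radial part gives $r^2\bigl(s^{m+1}(s-n)^mr^{s-1}+O(r^s)\bigr)^2=s^{2m+2}(s-n)^{2m}r^{2s}+O(r^{2s+1})$. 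For $m=0$ the identity $|\nabla_{g_+}r|^2=r^2$ gives the exact value $s^2r^{2s}$, with no error term.

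Next comes the transition region $\frac{\E\D}{1+\D}<r<\E$. There $\phi_{\E\D}=\frac{1+\D}{\E}r-\D$, so
\begin{equation*}
r^s\phi_{\E\D}=\tfrac{1+\D}{\E}\,r^{s+1}-\D\, r^s .
\end{equation*}
By linearity, $\Delta^m_{g_+}(r^s\phi_{\E\D})$ is $\frac{1+\D}{\E}$ times (\ref{CutOff-Estimates3}) with $s+1$ in place of $s$, minus $\D$ times (\ref{CutOff-Estimates3}) itself. This gives the main term $\frac{1+\D}{\E}(s+1)^m(s+1-n)^mr^{s+1}$. The remainder $\frac{1+\D}{\E}O(r^{s+2})$ becomes $O(r^{s+2},\E^{-1})$, and the second piece $\D\bigl(s^m(s-n)^mr^s+O(r^{s+1})\bigr)$ becomes $O(r^s,\D)$. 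For $m=1$ the basic identity is exact up to the $r^2\cdot\tfrac12\tr_{\hat g}(g_{(1)})\partial_r$ term and $O(r^3)E$. These can be absorbed into $O(r^s,\D)$ using $r<\E$, so $\E^{-1}r^{s+2}\le r^{s+1}$. I would check there which constant the main term actually carries, since the quadratic $(s+1)(s+1-n)$ is what the computation produces.

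For the gradient estimates (\ref{CutOff-Estimates2-1}) and (\ref{CutOff-Estimates2}), I would write $\Delta^m_{g_+}(r^s\phi_{\E\D})=\frac{1+\D}{\E}F_{s+1}-\D F_s$, where $F_a:=\Delta^m_{g_+}r^a$, and expand the square of the gradient. The square $\bigl(\frac{1+\D}{\E}\bigr)^2|\nabla_{g_+}F_{s+1}|^2$ gives, by (\ref{CutOff-Estimates4}), the leading term $\bigl(\frac{1+\D}{\E}\bigr)^2(s+1)^{2m+2}(s+1-n)^{2m}r^{2s+2}$ plus $O(r^{2s+3},\E^{-2})$. The cross term $2\frac{(1+\D)\D}{\E}\langle\nabla F_{s+1},\nabla F_s\rangle$ is of size $r^{s+1}\cdot r^{s}$ times $\E^{-1}\D$, and its subleading parts give the listed terms $O(r^{2s+1},\E^{-1},\D)$ and $O(r^{2s+2},\E^{-1},\D)$. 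The term $\D^2|\nabla F_s|^2$ gives $O(r^{2s},\D^2)$, which is in particular $O(r^{2s},\D)$. Whenever convenient I would again use $r<\E$ to trade powers of $r$ for powers of $\E$.

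The step I expect to be the main obstacle is making the big-$O$ bookkeeping uniform. Each remainder must be bounded by constants independent of $\E$ and $\D$, and iterating $\Delta_{g_+}$ must not degrade the remainder. For this I would keep every error term in the explicit form $r^{b}v$ with $v$ having controlled derivatives in the collar. For large $m$ this needs more derivatives of $g_r$ than the $C^{3,\alpha}$ assumption obviously gives, and I would note that this regularity is needed, or assume enough smoothness near $\partial X$.
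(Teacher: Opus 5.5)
\textbf{Overall.} Your route is the paper's in substance: Lemma \ref{ExpansionLaplacian} applied to powers of $r$, induction on $m$, then the transition region. It is organized more cleanly. Writing $r^s\phi_{\E\D}=\frac{1+\D}{\E}r^{s+1}-\D r^s$ makes $\Delta^m_{g_+}(r^s\phi_{\E\D})=\frac{1+\D}{\E}\Delta^m_{g_+}(r^{s+1})-\D\,\Delta^m_{g_+}(r^s)$ exact. The paper's product-rule-then-iterate computation uses this only implicitly. You also keep the tangential part of the gradient, which the paper drops by calling these functions radial; that part is harmless. You are right that the $m=1$ constant is $(s+1)(s+1-n)$. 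The paper's own computation gives this, so $(s+n-1)$ in the statement is a typo.

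\textbf{The gap: the $m=1$ absorption.} The bound $r<\E$ controls $\frac{1+\D}{\E}\cdot\frac{s+1}{2}\tr_{\hat g}(g_{(1)})\,r^{s+2}$ by $Cr^{s+1}$, but it supplies no factor of $\D$.
\begin{itemize}
\item At $r=\E/2$ this term is comparable to $\E^{s+1}$ for every small $\D$, at any boundary point where $\tr_{\hat g}(g_{(1)})\neq0$. So it is not $O(r^s,\D)$ in the paper's sense, which requires a bound by $B\D$.
\item Even where that trace vanishes, nothing makes the $\frac{1+\D}{\E}O(r^{s+3})$ remainder $O(\D)$.
\item Likewise, your expansion of $\bigl(\frac{1+\D}{\E}\bigr)^2|\nabla_{g_+}F_{s+1}|^2$ produces an $O(r^{2s+3},\E^{-2})$ term for $m=1$. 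That term is absent from (\ref{CutOff-Estimates2-1}). Trading $r$ for $\E$ cannot move it into $O(r^{2s+1},\E^{-1},\D)$ or $O(r^{2s},\D^2)$.
\end{itemize}
The trade is legitimate only for terms that already carry a factor of $\D$, such as your cross term and the $m=0$ case.

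\textbf{What your argument actually proves.} For a general AH metric, it proves the $m=1$ case with the same $O(r^{s+2},\E^{-1})$ and $O(r^{2s+3},\E^{-2})$ errors as the $m>1$ case. The paper's proof makes the same silent absorption: $r^s\Delta_{g_+}\phi_{\E\D}$ contributes $\frac{1+\D}{\E}O(r^{s+2})$, which it lumps into $O(r^{s+1},\D)$.

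The correction costs nothing downstream. Integrated against $r^{-n-1}\,dr$ over $\frac{\E\D}{1+\D}<r<\E$, these terms give $O(\E^{2s-n+1})$. That is lower order than $\E^{2s-n}$ in the proof of Theorem \ref{Sharp-UpperBounds}. So state and use the corrected form rather than forcing the error into a $\D$-term.

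Your regularity caveat for large $m$ is also fair; the paper does not address it.
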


\begin{proof}
    We start by arguing (\ref{CutOff-Estimates3}) first. On a neighborhood of $\partial X$, we have
    \[
    \Delta_{g_+}(r^s) = \Div(sr^{s-1}\nabla_{g_+}r) = sr^{s-1}(-(n-1)r+O(r^2)) + s(s-1)r^{s-2}\cdot r^2,
    \]
    where we have used that $|\nabla_{g_+}r|^2\equiv r^2$ on a neighborhood of $\partial X$. Therefore, 
    \begin{equation}\label{CutOff-Estimates5}
        \Delta_{g_+}(r^s) = s(s-n)r^s + O(r^{s+1})
    \end{equation}
    as $r\to 0^+$. Assume that (\ref{CutOff-Estimates3}) holds for $m=k$. Computing the Laplacian of $\Delta^k(r^s)$ yields 
    \[
    \begin{split}
    \Delta_{g_+}^{k+1}(r^s) &= \Delta_{g_+}(s^k(s-n)^kr^s + O(r^{s+1}))\\ &= s^k(s-n)^k\cdot\left\{s(s-n)r^s + O(r^{s+1})\right\},
    \end{split}
    \]
    from which the claim follows. This shows
    (\ref{CutOff-Estimates3}). For (\ref{CutOff-Estimates4}), we have
    \[
    \begin{split}
    |\nabla_{g_+}\Delta_{g_+}^m(r^s)|^2 &= r^2(\partial_r(\Delta^m_{g_+}(r^s)))^2 \\&= r^2(s^{m+1}(s-n)^mr^{s-1} + O(r^{s}))^2 \\& = s^{2m+2}(s-n)^{2m}r^{2s} + O(r^{2s+1}),
    \end{split}
    \]
    as $r\to0^+$, which gives the result. Observe that, if $m=0$, then 
    \[
    |\nabla_{g_+}(r^s)|^2 = s^2r^{2s-2}|\nabla_{g_+}r|_{g_+} = s^2r^{2s},
    \]
    as desired.

    We proceed now with the proof of (\ref{CutOff-Estimates1}) and (\ref{CutOff-Estimates2}). On $\frac{\epsilon \D}{1+\D}< r(p)< \epsilon$, we have
    \begin{equation}
        \nabla_{g_+}\phi_{\E\D} = \frac{1+\D}{\E}\cdot \nabla_{g_+}r
    \end{equation}
    and 
    \begin{equation}
        \Delta_{g_+}\phi_{\E\D} = -(n-1)\frac{1+\D}{\E}\cdot r + O(r^2)
    \end{equation}

    We now proceed as follows. First, 
    \[
    \begin{split}
        \Delta_{g_+}(r^s\phi_{\E\D}) &= r^{s}\Delta_{g_+}\phi_{\E\D} + \phi_{\E\D}\Delta_{g_+}(r^s) + 2sr^{s-1}g_+(\nabla_{g_+}r, \nabla_{g_+}\phi_{\E\D}) \\ &= -(n-1)\frac{1+\D}{\E}r^{s+1} + s(s-n)\frac{1+\delta}{\epsilon}\left(r-\frac{\epsilon\delta}{1+\delta}\right)r^s + 2sr^{s+1}\frac{1+\D}{\E} + O(r^{s+1},\D) \\&= s(n-s)\D r^s + \frac{1+\D}{\E}r^{s+1}(-(n-1)+s(s-n)+2s) + O(r^{s+1}, \D).
    \end{split}
    \]
    That is,
    \begin{equation}
        \Delta_{g_+}(r^s\phi_{\E\D}) = O(r^s,\D) + \frac{1+\D}{\E}(s+1)(s+1-n)r^{s+1}.
    \end{equation}
    Applying the Laplacian again and using (\ref{CutOff-Estimates5}) gives
    \[
    \begin{split}
        \Delta_{g_+}^2(r^s\phi_{\E\D}) &= \frac{1+\D}{\E}(s+1)(s+1-n)\Delta_{g_+}(r^{s+1}) + O(r^s,\D) \\& = \frac{1+\D}{\E}(s+1)^2(s+1-n)^2r^{s+1}+ O(r^{s+2},\E^{-1}) + O(r^s,\D).
    \end{split}
    \]
    Once again,
    \[
    \Delta^3_{g_+}(r^s\phi_{\E\D}) = \frac{1+\D}{\E}(s+1)^3(s+1-n)^3r^{s+1} + O(r^{s+2},\E^{-1}) + O(r^s,\D).
    \]
    Therefore, it follows inductively it holds that
    \[
    \Delta^m_{g_+}(r^s\phi_{\E\D}) = 
    \begin{cases}
    \frac{1+\D}{\E}(s+1)(s+n-1)r^{s+1} +O(r^s,\D) & \;\text{ if }\;m=1,\\
    \frac{1+\D}{\E}(s+1)^m(s+1-n)^mr^{s+1} + O(r^{s+2},\E^{-1}) + O(r^s,\D) &\;\text{ if }\;m>1,
    \end{cases} 
    \]
    as claimed.

    Finally, as $\Delta^m_{g_+}(r^s\phi_{\E\D})$ is a radial function, we compute the norm squared of its gradient as follows. For $m=1$, we have
    \[
    \begin{split}
        |\nabla_{g_+}\Delta_{g_+}(r^s\phi_{\E\D})|^2 &= r^2(\partial_r\Delta_{g_+}(r^s\phi_{\E\D}))^2 \\&=r^2\left(\frac{1+\D}{\E}(s+1)^2(s+n-1)r^{s}+ O(r^{s-1},\D)\right)^2 \\&=\left(\frac{1+\D}{\E}\right)^2(s+1)^4(s+1-n)^2r^{2s+2} +O(r^{2s+1},\E^{-1},\D)+O(r^{2s},\D^2).
    \end{split}
    \]
    On the other hand, for $m>1$, 
    \[
    \begin{split}
    |\nabla_{g_+}\Delta_{g_+}^m(r^s\phi_{\E\D})|^2 &= r^2(\partial_r\Delta_{g_+}^m(r^s\phi_{\E\D}))^2 \\ & = r^2\left(\frac{1+\D}{\E}(s+1)^{m+1}(s+1-n)^mr^s+O(r^{s+1},\E^{-1}) + O(r^{s-1},\D)\right)^2 \\&= \left(\frac{1+\D}{\E}\right)^2(s+1)^{2m+2}(s+1-n)^{2m}r^{2s+2}+O(r^{2s+3},\E^{-2})+ O(r^{2s+1},\E^{-1},\D) \\&\;\;\;\;+ O(r^{2s+2},\E^{-1},\D)+O(r^{3s+2},\E^{-2},\D) +O(r^{2s},\D),
    \end{split}
    \]
    as desired. Note that if $m=0$, then 
    \[
    \begin{split}
    |\nabla_{g_+}(r^s\phi_{\E\D})|^2 &= |r^s\nabla_{g_+}\phi_{\E\D} + sr^{s-1}\phi_{\E\D}\nabla_{g_+}r|^2 \\& = r^{2s+2}\left(\frac{1+\D}{\E}\right)^2 + s^2r^{2s}\left(\frac{1+\D}{\E}\right)^2\left(r - \frac{\E\D}{1+\D}\right)^2 \\&\;\;\;\;+ 2sr^{2s+1}\left(\frac{1+\D}{\E}\right)^2\left(r - \frac{\E\D}{1+\D}\right) \\&= \left(\frac{1+\D}{\E}\right)^2(s+1)^2r^{2s+2} + O(r^{2s},\D),
    \end{split}
    \]
    as claimed.
\end{proof}

\section{Proof of the Main Results}

\subsection{Proof of Theorem \ref{UpperBound-CC} and Corollary \ref{UpperBound-CC-Sub}}

\begin{proof}[Proof of Theorem \ref{UpperBound-CC}]
    Let $y_m\in M$ be a point where $|dr|_{\bar g}|_{\partial X}$ attains its minimum value $\alpha_m$. For any $\epsilon>0$ and  $R>0$, there exists a point $p$ near $y_m$ such that $\Ric_{g_+}\ge -\alpha_m^2(1+\epsilon)ng_+$ on the geodesic ball $B(p,R)$. By Takeuchi \cite{Takeuchi} (see Theorem 2), 
    \begin{equation}
        \lambda_{1,p}(X^{n+1},g_+)\le \lambda_{1,p}(B(p,R),g_+) \le \lambda_{1,p}(V(-\alpha_m^2(1+\epsilon),R),g_H),
    \end{equation}
    where $V(-\alpha_m^2(1+\epsilon),R)$ is a geodesic ball of radius $R$ in $\mathbb{H}^{n+1}(-\alpha_m^2(1+\epsilon))$. In particular, 
    \begin{equation}
        \lambda_{1,p}(X^{n+1},g_+)\le \lim_{R\to\infty} \lambda_{1,p}(V(-\alpha_m^2(1+\epsilon),R)).
    \end{equation}
    From Lemma 4.2 in \cite{jin2024lowerbounddirichleteigenvalue}, we deduce that
    \begin{equation}
        \lim_{R\to\infty} \lambda_{1,p}(V(-\alpha_m^2(1+\epsilon),R),g_H) = \alpha_m^p(1+\epsilon)^{\frac{p}{2}}\left(\frac{n}{p}\right)^p.
    \end{equation}
    The result now follows after taking the limit as $\epsilon$ goes to zero. 
\end{proof}

\begin{proof}[Proof of Corollary \ref{UpperBound-CC-Sub}]
    As a first step, we claim that the asymptotic sectional curvatures of $Y^{k+1}$ are bounded below by $-1$. Indeed, let $\tilde r:= r|_{\overline Y}$ be the restriction to $\overline Y^{k+1}$ of a defining function $r$ for $(X^{n+1},g_+)$. Then $|d\tilde r|_{\bar h}|_{\partial Y}\le |dr|_{\bar g}|_{\partial Y} = 1$, and thus the claim follows. On the other hand, under the assumption that $\lambda_{1,p}(Y^{k+1}) = \left(\frac{k}{p}\right)^p$, it follows from Theorem \ref{UpperBound-CC}, applied to the CC manifold $(Y^{k+1},h_+)$, that $(\min |d\tilde r|_{\bar h}|_{TY})^p\ge 1$. Therefore, $|d\tilde r|_{\hat h}|_{\partial Y}\equiv 1$, thus $(Y^{k+1},h_+)$ is AH.

    Finally, the mean curvature of $(Y^{k+1},h_+)$ vanishes at $\partial Y$ (i.e. $C=0$), following Proposition 1.8 in \cite{PerezAyalaTyrrell}. Similarly, the orthogonality at infinity follows from the proof of the same Proposition 1.8; see section 4.1 in \cite{Perez-AyalaSamuel2025HaCc}.
\end{proof}

\subsection{Proof of Theorem \ref{Sharp-UpperBounds}}

\begin{proof}[Proof of Theorem \ref{Sharp-UpperBounds}, Clamped boundary conditions]
According to (\ref{Poly-Eigenvalue-Def}) and (\ref{VariationalCharacterization}), we have
    \[
    \Gamma^l_1(X^{n+1}, g_+)\le R^C_l(r^{s}\phi_{\E\D})
    \]
    for all $\epsilon\in(0,\epsilon_r)$ and $\D>0$, where $\phi_{\E\D}$ is the cutoff function introduced in (\ref{CutOff-Function}) and $s$ is such that $2s-n\not = 0$. Note that $r^{s}\phi_{\E\D}\in W^{2,l}_0(\Omega)$, where $\Omega$ is a bounded domain containing the support of $\phi_{\E\D}$. Assume $-1<2s - n< 0$, that is, $\frac{n-1}{2}<s<\frac{n}{2}$. In the end, we will let $\D\to 0^+$ first, and then $\E\to0^+$, so for expressions like $O(\E^p,\D^q)$ we need to keep track of only the powers of $\D$. 
    
    We start with the expansion of the denominator in $R^C_l(r^s\phi_{\E\D})$, the integral of $(r^s\phi_{\E\D})^2$:
    \[
    \begin{split}
         &\;\;\;\;O(1) + \int_\E^{\E_r}\int_{\partial X} r^{2s-n-1}(1+O(r))\;dv_{\hat g}dr \\&\;\;\;\;+ \left(\frac{1+\D}{\E}\right)^2\int_{\frac{\E\D}{1+\D}}^\E\int_{\partial X}\left(r-\frac{\E\D}{1+\D}\right)^2r^{2s-n-1}(1+O(r))\;dv_{\hat g}dr \\&= O(1) + \Vol_{\hat g}(\partial X)\cdot \left(\left[\frac{r^{2s-n}}{2s-n}\right|^{\E_r}_{\E} + O(\E^{2s-n+1})\right) \\&\;\;\;\; + \left(\frac{1+\D}{\E}\right)^2\Vol_{\hat g}(\partial X)\int_{\frac{\E\D}{1+\D}}^\E\left(r^2-\frac{2r\E\D}{1+\D}+\left(\frac{\E\D}{1+\D}\right)^2\right)r^{2s-n-1}(1+O(r))\;dr \\&= O(1) - \Vol_{\hat g}(\partial X)\frac{\E^{2s-n}}{2s-n} + \Vol_{\hat g}(\partial X)\frac{(1+\D)^2}{\E^2(2s-n+2)} \left(\E^{2s-n+2} - \left(\frac{\E\D}{1+\D}\right)^{2s-n+2}\right)\\&\;\;\;\;+ \Vol_{\hat g}(\partial X)\left(\frac{1+\D}{\E}\right)^2\left(O(\E^{2s-n+2},\D^{2s-n+2}) + O(\E^{2s-n+2},\D^2) \right) \\&= O(1) - \Vol_{\hat g}(\partial X)\frac{\E^{2s-n}}{2s-n} + \frac{\Vol_{\hat g}(\partial X)}{2s-n+2}\E^{2s-n}+ O(\D^{2s-n+2}) + O(\D^2) \\ &= O(1) + O(\D^{2s-n+2}) + \Vol_{\hat g}(\partial X)\E^{2s-n}\left(\frac{1}{2s -n + 2} - \frac{1}{2s-n}\right)
    \end{split}
    \]
    After taking $\D\to 0^+$, we obtain
    \begin{equation}\label{Sharp-UB-Clamped1}
    \lim_{\D \to 0}\int_X |r^s\phi_{\E\D}|^2\;dv_{g_+} = O(1) + \Vol_{\hat g}(\partial X)\E^{2s-n}\left(\frac{1}{2s -n + 2} - \frac{1}{2s-n}\right)
    \end{equation}

    Let us focus on the numerator. According to (\ref{Rayleigh-Quotient-C}), we have two cases.

    \vspace{.15in}
    \underline{(Case 1: $l=2m$)} We first assume that $l=2m$, so that the numerator involves the integral of $|\Delta^m_{g_+}(r^s\phi_{\E\D})|^2$. According to Lemma \ref{CutOff-Estimates}, we need to treat the cases of $m=1$ and $m>1$ separately. Starting with $m=1$, estimates (\ref{CutOff-Estimates1}) and (\ref{CutOff-Estimates3}) gives that the integral of $|\Delta_{g_+}(r^s\phi_{\E\D})|$ equals
    \[
    \begin{split}
        &O(1)+\Vol_{\hat g}(\partial X)\int_\E^{\E_r}(s^{2}(s-n)^{2}r^{2s}+O(r^{2s+1}))r^{-n-1}(1+O(r))\;dr \\&\;\;\;\;+ \Vol_{\hat g}(\partial X)\int_{\frac{\E\D}{1+\D}}^\E \left(\left(\frac{1+\D}{\E}\right)^2(s+1)^2(s+n-1)^2r^{2s+2} + O(r^{2s},\D)\right)r^{-n-1}(1+O(r))\;dv_{g_+},
    \end{split}
    \]
    where we omit writing the term $O(r^{2s+1},\E^{-1},\D)$ because, as explained, we will let $\D\to0^+$ first. Continuing, we observe
    \[
    \begin{split}
        \int_X|\Delta_{g_+}(r^s\phi_{\E\D})|^2\;dv_{g_+} &= O(1) - \Vol_{\hat g}(\partial X)s^2(s-n)^2\frac{\E^{2s-n}}{2s-n} + O(\E^{2s-n+1}) \\&\;\;\;\;+\Vol_{\hat g}(\partial X)(s+1)^2(s+n-1)^2\frac{\E^{2s-n}}{2s-n+2} + O(\D) + O(\D^{2s-n+2}).
    \end{split}
    \]
    In the case of $m>1$, the integral of $|\Delta^m_{g_+}(r^s\phi_{\E\D})|^2$ gives
    \[
    \begin{split}
 &O(1) + \Vol_{\hat g}(\partial X)\int_\E^{\E_r}(s^{2m}(s-n)^{2m}r^{2s}+O(r^{2s+1}))r^{-n-1}(1+O(r))\;dr\\&\;\;\;\;+\Vol_{\hat g}(\partial X)\int_{\frac{\E\D}{1+\D}}^\E \Bigg(\frac{1+\D}{\E}(s+1)^m(s+1-n)^mr^{s+1} \\&\hspace{2in}+ O(r^{s+2},\E^{-1}) + O(r^s,\D)\Bigg)^2 r^{-n-1}(1+O(r))\;dr \\ &= O(1) - \Vol_{\hat g}(\partial X)s^{2m}(s-n)^{2m} \frac{\E^{2s-n}}{2s-n} + O(\E^{2s-n+1})\\&\;\;\;\;+\Vol_{\hat g}(\partial X)(s+1)^{2m}(s+1-n)^{2m}\frac{\E^{2s-n}}{2s-n+2} +O(\D)+O(\D^{2s-n+2}).
    \end{split}
    \]
    Therefore, in either case, $m=1$ or $m>1$, we always obtain
    \begin{equation}\label{Sharp-UB-Clamped2}
    \begin{split}
    \lim_{\D\to0^+}\int_X&|\Delta^m_{g_+}(r^s\phi_{\E\D})|^2\;dv_{g_+} = O(1) + O(\E^{2s-n+1}) \\&\hspace{1.235in}+\Vol_{\hat g}(\partial X)\E^{2s-n}\left(\frac{(s+1)^{2m}(s+1-n)^{2m}}{2s-n+1} - \frac{s^{2m}(s-n)^{2m}}{2s-n}\right)
    \end{split}
    \end{equation}
    Recall that $-1<2s-n<0$. Putting (\ref{Sharp-UB-Clamped1}) and (\ref{Sharp-UB-Clamped2}), we deduce
    \[
    \begin{split}
    \Gamma^l_1(X^{n+1},g_+)&\le \lim_{\E\to0^+}\lim_{\D\to0+}R^C_l(r^s\phi_{\E\D}) \\&\le \frac{\displaystyle\left(\frac{(s+1)^{2m}(s+1-n)^{2m}}{2s-n+1} - \frac{s^{2m}(s-n)^{2m}}{2s-n}\right)}{\displaystyle\left(\frac{1}{2s -n + 2} - \frac{1}{2s-n}\right)}\\&= \frac{1}{2}\left(s^{2m}(s-n)^{2m}(2s-n+2) - \frac{(s+1)^{2m}(s+1-n)^{2m}(2s-n+2)(2s-n)}{2s-n+1}\right)
    \end{split}
    \]
    for all $s\in (\frac{n-1}{2},\frac{n}{2})$. After taking $s\to \frac{n}{2}^-$, we obtain $\Gamma_l(X^{n+1},g_+)\le \left(\frac{n}{2}\right)^{4m} = \left(\frac{n}{2}\right)^{2l}$.
    
\vspace{.15in}
    \underline{(Case 2: $l=2m+1$)} In this case, and according to (\ref{Rayleigh-Quotient-C}), the numerator is the integral of $|\nabla_{g_+}\Delta^m_{g_+}(r^s\phi_{\E\D})|^2$. As before, we start with the case when $m=1$. Recall that for terms of the form $O(\E^p,\D^q)$, we only need to keep track of the powers of $\D$ as we are taking $\D\to0^+$ first. Using estimates (\ref{CutOff-Estimates2-1}) and (\ref{CutOff-Estimates4}) in Lemma \ref{CutOff-Estimates}, we obtain that the integral of $|\nabla_{g_+}\Delta_{g_+}(r^s\phi_{\E\D})|^2$ equals 
    \[
    \begin{split}
        &O(1) + \Vol_{\hat g}(\partial X)\int_{\E}^{\E_r}\left(s^{4}(s-n)^{2}r^{2s} + O(r^{2s+1}) \right)r^{-n-1}(1+O(r))\;dr \\ &+ \Vol_{\hat g}(\partial X)\int_{\frac{\E\D}{1+\D}}^{\E}\Bigg(\left(\frac{1+\D}{\E}\right)^2(s+1)^4(s+1-n)^2r^{2s+2} +O(r^{2s+1},\E^{-1},\D)\\&\hspace{2.5in}+O(r^{2s},\D^2)\Bigg)r^{-n-1}(1+O(r))dr \\&= O(1) - s^4(s-n)^2\frac{\E^{2s-n}}{2s-n} + O(\E^{2s-n+1}) \\&\;\;\;\;+ \Vol_{\hat g}(\partial X)(s+1)^4(s+1-n)^2\frac{\E^{2s-n}}{2s-n+2} + O(\D^{2s-n+2}) + O(\D).
    \end{split}
    \]
    In summary,
    \[
    \begin{split}
    \int_X|\nabla_{g_+}\Delta_{g_+}(r^s\phi_{\E\D})|^2\;dv_{g_+} &= O(1) - s^4(s-n)^2\frac{\E^{2s-n}}{2s-n} + O(\E^{2s-n+1}) \\&\;\;\;\;+ \Vol_{\hat g}(\partial X)(s+1)^4(s+1-n)^2\frac{\E^{2s-n}}{2s-n+2} + O(\D^{2s-n+2}) + O(\D).
    \end{split}
    \]
    For the case where $m>1$, we use estimates (\ref{CutOff-Estimates2}) and (\ref{CutOff-Estimates4}) in Lemma \ref{CutOff-Estimates} to compute the integral of $|\nabla_{g_+}\Delta^m_{g_+}(r^s\phi_{\E\D})|^2$. We obtain
    \[
    \begin{split}
        &O(1) - s^{2m+2}(s-n)^{2m}\frac{\E^{2s-n}}{2s-n}+ O(\E^{2s-n+1}) \\&+ \Vol_{\hat g}(\partial X) (s+1)^{2m+2}(s+1-n)^{2m}\frac{\E^{2s-n}}{2s-n+2} + O(\D^{2s-n+2}) + O(\D).
    \end{split}
    \]
    Once again, in either case, $m=1$ or $m>1$, we deduce
    \begin{equation}\label{Sharp-UB-Clamped3}
    \begin{split}
        \lim_{\D\to 0^+} \int_X|\nabla_{g_+}&\Delta^m_{g_+}(r^s\phi_{\E\D})|^2\;dv_{g_+} = O(1)+ O(\E^{2s-n+1})\\&\;\;\;\;+ \Vol_{\hat g}(\partial X)\E^{2s-n}\left(\frac{(s+1)^{2m+2}(s+1-n)^{2m}}{2s-n+2} -\frac{s^{2m+2}(s-n)^{2m}}{2s-n}\right)
    \end{split}
    \end{equation}
Recalling that $-1<2s-n<0$, and combining (\ref{Sharp-UB-Clamped1}) and (\ref{Sharp-UB-Clamped3}), we obtain that
\[
\begin{split}
    \Gamma^l_1(X^{n+1},g_+)&\le \lim_{\E\to0^+}\lim_{\D\to 0^+}R^C_l(r^s\phi_{\E\D}) \\& \le \frac{\displaystyle\left(\frac{(s+1)^{2m+2}(s+1-n)^{2m}}{2s-n+2} -\frac{s^{2m+2}(s-n)^{2m}}{2s-n}\right)}{\displaystyle\left(\frac{1}{2s -n + 2} - \frac{1}{2s-n}\right)} \\ &=\frac{1}{2}\left((2s-n+2)s^{2m+2}(s-n)^{2m} - (2s-n)(s+1)^{2m+2}(s+1-n)^{2m}\right)
\end{split}
\]
holds for all $s\in (\frac{n-1}{2}, \frac{n}{2})$. After taking $s\to\frac{n}{2}^-$, we obtain $\Gamma_l(X_{n+1},g_+)\le \left(\frac{n}{2}\right)^{4m+2} = \left(\frac{n}{2}\right)^{2l}$, as desired.
\end{proof}

\begin{proof}[Proof of Theorem \ref{Sharp-UpperBounds}, Buckling boundary conditions]
 We use the same test function $r^{s}\phi_{\E\D}$. The estimates that we need for the denominator in $R^B(r^s\phi_{\E\D})$ are in Lemma \ref{CutOff-Estimates}, specifically (\ref{CutOff-Estimates2}) and (\ref{CutOff-Estimates4}) with $m=0$. The computations that we need have already been performed; from (\ref{Sharp-UB-Clamped3}) with $m=0$, we obtain 
 \begin{equation}\label{Sharp-UB-Buckling1}
     \lim_{\D\to0^+}\int_X|\nabla_{g_+}(r^s\phi_{\E\D})|^2 = O(1)+O(\E^{2s-n+1}) + \Vol_{\hat g}(\partial X)\E^{2s-n}\left(\frac{(s+1)^2}{2s-n+2} - \frac{s^2}{2s-n}\right).
 \end{equation}
 For the numerator in $R^B(r^s\phi_{\E\D})$, we use estimates in Lemma (\ref{CutOff-Estimates}) with $m=1$. Once again, the estimate that we need has already been done; from (\ref{Sharp-UB-Clamped2}), we obtain
 \begin{equation}\label{Sharp-UB-Buckling2}
 \begin{split}
     \lim_{\D\to0^+}\int_X |\Delta_{g_+}(r^s\phi_{\E\D})|^2\;dv_{g_+} &=  O(1) + O(\E^{2s-n+1}) \\&\;\;\;\;+ \Vol_{\hat g}(\partial X)\left(\frac{(s+1)2(s+1-n)^2}{2s-n+1} - \frac{s^2(s-n)^2}{2s-n}\right)
 \end{split}
 \end{equation}
Putting (\ref{Sharp-UB-Buckling1}) and (\ref{Sharp-UB-Buckling2}) yields
\[
\begin{split}
\Lambda_1(X^{n+1},g_+) &\le \lim_{\E\to 0^+}\lim_{\D\to0^+}R^B(r^s\phi_{\E\D}) \\&\le \frac{\displaystyle\left(\frac{(s+1)^2(s+1-n)^2}{2s-n+1} - \frac{s^2(s-n)^2}{2s-n}\right)}{\displaystyle\left(\frac{(s+1)^2}{2s-n+2} - \frac{s^2}{2s-n}\right)} \\&= 
\frac{(2s - n + 2)\left(s^2 (n - s)^2 (2s - n + 1) + (n - 2s)(s+1)^2 (s+1 - n)^2\right)}{(2s - n + 1)\left(s^2 (2s - n + 2) + (n - 2s)(s+1)^2\right)}
\end{split}
\]
for all $s\in(\frac{n-1}{2}, \frac{n}{2})$. The result follows after taking $s\to \frac{n}{2}^-$.
\end{proof}

\subsection{Proof of Theorems \ref{Sharp-LowerBounds-pLap} and \ref{Sharp-LowerBounds-Clamped&Buckling}}

On any asymptotically hyperbolic manifold, given a smooth defining function $r$, there is a unique, smooth, and strictly positive function on $X^{n+1}$ satisfying
\begin{equation}\label{LeeEigenfunction}
\begin{cases}
    \Delta_{g_+}u = (n+1)u &\text{on } X^{n+1},\\  u = r^{-1}+O(1) & \text{as } r\to0^+, 
\end{cases}
\end{equation}
see Proposition 4.1 in \cite{LeeJohnM.1995Tsoa}\footnote{The apparent discrepancy with Lee's work is due to the differences in the sign convention in the definition of the Laplace operator. Recall that for us $\Delta_{g_+}$ is defined as a negative operator.}. We call a solution of (\ref{LeeEigenfunction}) a Lee-eigenfunction. On a weakly Poincar\'e-Einstein manifold with $\Ric_{g_+}\ge -ng_+$, the next term in the expansion of $u$ is given by 
\begin{equation}\label{LeeEigen-Expansion}
u = r^{-1} + \frac{\hat R}{4n(n-1)}\cdot r^2 + O(r^3).
\end{equation}
From (\ref{LeeEigen-Expansion}) it then follows that 
\begin{equation}\label{LeeEigen-Expansion2}
    u^2 - |\nabla_{g_+}u|^2 = \frac{\hat R}{n(n-1)} + o(1).
\end{equation}
See Lemma 2.1 and the proof of Lemma 2.2 in \cite{GuillarmouColin2010SCoP}, together with remarks afterwards, where both (\ref{LeeEigen-Expansion}) and (\ref{LeeEigen-Expansion2}) are derived. 

Using these expansions, a maximum principle argument gives that if the Yamabe constant of the conformal infinity $Y(\partial X,[g_+]_\infty)$ is nonnegative, then we can take a defining function $r$ for which $\hat g = (r^2g_+)|_{(T\partial X)^2}$ has nonnegative scalar curvature, and so the associated Lee-eigenfunction satisfies the gradient estimates 
\begin{equation}\label{GradientEstimates}
    |\nabla_{g_+}u|^2<u^2
\end{equation}
everywhere on $X^{n+1}$; see Theorem A in \cite{LeeJohnM.1995Tsoa}, or the refinement in \cite{GuillarmouColin2010SCoP}. We use $u$ as a cutoff function to derive eigenvalue lower bounds.

\begin{remark}
    Equation (\ref{LeeEigenfunction}) is a specific case of the more general family of Poisson equations
    \[
    -\Delta_{g_+}u = s(n-s)u.
    \]
    If $s(n-s)$ is not in the point spectrum of $-\Delta_{g_+}$ and $\text{Re}(s)>\frac{n}{2}$, $s\not\in \frac{n}{2}+\mathbb{N}$, then given $f\in C^{\infty}(\partial X)$, there exist a unique solution $u_s$ satisfying $u_s = Fr^{n-s}+Gr^s$. Here, $F,G\in C^{\infty}(\overline X)$ and $F|_{\partial X} = f$. The map sending each $f\in C^{\infty}(\partial X)$ to its unique solution $u_s$ is called the Poisson operator, while the map sending $f\in C^{\infty}(\partial X)$ to $G|_{\partial X}$ is known as the scattering operator; see \cites{Graham-Zworski, CaseJeffreyS.2016OFGO}. Lee studies the case where $s=n+1$.
\end{remark}

Let $Y^{k+1}$ be a complete immersed submanifold of $X^{n+1}$, $u$ a Lee-eigenfunction and denote by $h_+$ the induced metric on $Y^{k+1}$. Set $\tilde u = u|_Y$, that is, $\tilde u$ is the restriction of $u$ to $Y$. Using Lemma 2 in \cite{ChoeJaigyoung1992Iiom}, we have
\[
\begin{split}
\Delta_{h_+}\tilde u &= (\Delta_{g_+}u)|_Y + H^Y(u) - \tr_{g_+}(\nabla^2_{g_+}u)|_{(TY^\perp)^2} \\ &= (n+1)\tilde u + H^Y(u) - \tr_{g_+}((\mathring{\nabla}^2_{g_+}u)|_{(TY^\perp)^2}) - (n-k)\tilde u \\&= (k+1)\tilde u + H^Y(u) - \tr_{g_+}((\mathring{\nabla}^2_{g_+}u)|_{(TY^\perp)^2}),
\end{split}
\]
where in the second to last line we have used $\mathring{\nabla}^2_{g_+}u = \nabla^2_{g_+}u - ug_+$. For eigenvalue estimates on submanifolds, it is crucial to understand the term involving the normal trace of the trace-free hessian of $u$. This motivates the following definition:
\begin{definition}\label{Submanifold-Invariant}
    For a complete immersed submanifold $\iota: Y^{k+1}\to X^{n+1}$ inside a CC manifold $(X^{n+1},g_+)$ for which a Lee-eigenfunction $u$ exists, we define \[\beta^Y(u) := \sup_Y(u^{-1}\tr_{g_+}((\mathring{\nabla}^2_{g_+}u)|_{(TY^\perp)^2})).\] Additionally, set $\hat \beta^Y:= \inf \beta^Y(u)$, where the infimum is being taken over all possible Lee-eigenfunctions. We note that $\hat \beta^Y$ is an invariant of $(Y^{k+1},\iota^*g_+)$.
\end{definition}

We now have all the ingredients for the proofs. It is clear that the proof of Theorem \ref{Sharp-LowerBounds-pLap} reduces to the following Poincar\'e-type inequalities. 

\begin{proposition}\label{PoincareTypeInequality}
    Let $(X^{n+1},g_+)$ be a weakly Poincar\'e-Einstein manifold with $\Ric_{g_+}\ge -ng_+$, and assume that $Y(\partial X, [g_+]_\infty)\ge 0$. Then,
    \begin{enumerate}
        \item For any smooth domain $\Omega\subset M$ and $f\in C^\infty_0(\Omega)\setminus\{0\}$ nonnegative, we have 
        \begin{equation}\label{PI-Manifold}
        \int_\Omega |\nabla_{g_+}f|^p \ge \left(\frac{n}{p}\right)^p\int_\Omega f^p.
        \end{equation}
        \item If $(Y^{k+1},h_+)\subset (X^{n+1},g_+)$ is a complete and noncompact submanifold, for any $\Omega\subset Y^{k+1}$ and $f\in C^\infty_0(\Omega)\setminus\{0\}$ nonnegative, we have 
        \begin{equation}\label{PI-SubManifold}
        \int_\Omega |\nabla_{h_+}f|^p \ge \left(\frac{k- \beta^Y(u) - \alpha}{p}\right)^p \int_\Omega f^p.
        \end{equation}
        As in Theorem \ref{Sharp-LowerBounds-pLap}, we assume that $\alpha + \beta^Y(u)<k$.
    \end{enumerate}
\end{proposition}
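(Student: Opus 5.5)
The plan is to prove both Poincaré-type inequalities with the vector-field (divergence) method, using the Lee-eigenfunction $u$ from (\ref{LeeEigenfunction}) as the auxiliary weight. Since $u>0$, I set $w=\log u$ and use the vector field $V=\nabla_{g_+}w=u^{-1}\nabla_{g_+}u$. The key pointwise facts are:
\[
\Delta_{g_+}w=\frac{\Delta_{g_+}u}{u}-\frac{|\nabla_{g_+}u|^2}{u^2}=(n+1)-|V|^2, \qquad |V|<1,
\]
where the second inequality is the gradient estimate (\ref{GradientEstimates}), valid under the hypotheses $\Ric_{g_+}\ge -ng_+$, WPE, and $Y(\partial X,[g_+]_\infty)\ge 0$. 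Together these give $\Div V\ge n$ with $|V|\le 1$. This is the standard setting for a Poincaré inequality of $p$-type with constant $(n/p)^p$.

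For part (1), take $f\ge0$ in $C^\infty_0(\Omega)$ and integrate $\Div(f^pV)$, which vanishes by compact support:
\[
n\int_\Omega f^p\le\int_\Omega f^p\Div V=-p\int_\Omega f^{p-1}\langle\nabla f,V\rangle\le p\int_\Omega f^{p-1}|\nabla f|.
\]
Hölder then gives $p\left(\int f^p\right)^{(p-1)/p}\left(\int|\nabla f|^p\right)^{1/p}$, and dividing yields (\ref{PI-Manifold}). For $1<p<2$ I would apply this to $(f^2+\eta)^{1/2}-\eta^{1/2}$, or simply note that $f^p$ is $C^1$ for $p>1$, so the divergence identity is legitimate.

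For part (2), use $\tilde u=u|_Y$ and $\tilde V=\nabla_{h_+}\log\tilde u$. Since $\nabla_{h_+}\tilde u$ is the tangential projection of $\nabla_{g_+}u$, we get $|\tilde V|\le|V|<1$. The formula derived just before Definition \ref{Submanifold-Invariant} gives
\[
\Div_{h_+}\tilde V=(k+1)+\tilde u^{-1}H^Y(u)-\tilde u^{-1}\tr_{g_+}\bigl(\mathring\nabla^2_{g_+}u|_{(TY^\perp)^2}\bigr)-|\tilde V|^2 .
\]
I then bound the three correction terms separately:
\begin{itemize}
\item $|\tilde V|^2<1$;
\item $\tilde u^{-1}|H^Y(u)|=|\langle H^Y,\nabla u\rangle|/u\le|H^Y||V|\le\alpha$;
\item the normal trace term is at most $\beta^Y(u)$ by Definition \ref{Submanifold-Invariant}.
\end{itemize}
Hence $\Div_{h_+}\tilde V\ge k-\alpha-\beta^Y(u)>0$. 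Repeating the divergence and Hölder argument on $\Omega\subset Y$ then gives (\ref{PI-SubManifold}).

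The main obstacle is justifying the strict gradient estimate (\ref{GradientEstimates}) and the formula for $\Delta_{h_+}\tilde u$ in the immersed setting. Both are quoted from earlier work and the preceding computation, so I take them as given. The remaining care is only in sign conventions, namely that $H^Y(u)$ denotes $\langle H^Y,\nabla u\rangle$ with the paper's unnormalized mean curvature, and in the regularity of $f^p$, both of which are routine.
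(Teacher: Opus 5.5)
Your proposal is correct and follows the paper's argument: the paper also integrates $\Div(f^p\nabla_{g_+}\ln u)$ using $|\nabla_{g_+}\ln u|\le 1$ and $\Delta_{g_+}\ln u\ge n$ (respectively $\Delta_{h_+}\ln\tilde u\ge k-\beta^Y(u)-\alpha$ on $Y$, which you derive from the restricted-Laplacian formula where the paper cites it). The only difference is cosmetic: the paper uses Young's inequality with a parameter $\epsilon$ and then optimizes in $\epsilon$, which gives the same constant your H\"older step gives directly.
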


\begin{proof}
Throughout this proof, $u$ denotes a Lee-eigenfunction satisfying the gradient estimates (\ref{GradientEstimates}). For any smooth nonnegative function, we compute as follows:
    \[
    \begin{split}
        \Div(f^p \nabla_{g_+} (\ln u)) &= g_+(\nabla_{g_+}(f^p), u^{-1}\nabla_{g_+}u) + f^{p}\Delta_{g_+}(\ln u) \\&= p f^{p - 1}g_+(\nabla_{g_+}f, u^{-1}\nabla_{g_+}u) + f^p((n+1) - u^{-2}|\nabla_{g_+}u|^2)\\ &\ge -pf^{p -1}|\nabla_{g_+}f| + nf^p \\&\ge -\frac{p\epsilon^qf^{q(p - 1)}}{q} - \frac{p|\nabla_{g_+}f|^p}{p\epsilon^p} + nf^p,
    \end{split}
    \]
    where $q$ is the H\"older conjugate of $p$, i.e. $\frac{1}{p}+\frac{1}{q} = 1$, and where $\epsilon>0$. Let $\Omega$ be any smooth bounded domain in $X^{n+1}$. Applying the divergence theorem and using $|\partial_n(\ln u)|\le 1$ yields
    \[
    \begin{split}
        \int_{\partial \Omega} f^p \;dv_{\sigma_+}&\ge -\frac{p\epsilon^q}{q}\int_\Omega f^{p}\;dv_{g_+} - \frac{1}{\epsilon^p}\int_\Omega |\nabla_{g_+}f|^p\;dv_{g_+} + n\int_\Omega f^p\;dv_{g_+} \\ &= (1-p)\epsilon^{\frac{p}{p-1}}\int_\Omega f^p\;dv_{g_+} -\frac{1}{\epsilon^p}\int_\Omega |\nabla_{g_+}f|^p\;dv_{g_+} + n\int_\Omega f^p\;dv_{g_+},
    \end{split}
    \]
    that is,
    \[
        \frac{1}{\epsilon^p}\int_\Omega |\nabla_{g_+}f|^p\;dv_{g_+} + \int_{\partial \Omega} f^p\;dv_{\sigma_+} \ge \left(n + (1-p)\epsilon^{\frac{p}{p-1}}\right)\int_\Omega f^p\;dv_{g_+}
    \]
    If we further assume that $f\in C^{\infty}_0(\Omega)$, then
    \[
    \begin{split}
      &\int_\Omega |\nabla_{g_+}f|^p\;dv_{g_+} \ge \left(n + (1-p)\epsilon^{\frac{p}{p-1}}\right)\epsilon^p\int_\Omega f^p\;dv_{g_+}\\ \iff& \frac{\displaystyle\int_\Omega |\nabla_{g_+}f|^p\;dv_{g_+}}{\displaystyle\int_\Omega f^p\;dv_{g_+}} \ge \left(n + (1-p)\epsilon^{\frac{p}{p-1}}\right)\epsilon^p =: w(\epsilon).
    \end{split}
    \]
    The function $w$ attains a positive maximum at $\epsilon_m = \left(\frac{n}{p}\right)^{1- \frac{1}{p}}$ with value of $\left(\frac{n}{p}\right)^p$. This shows (1).

    As for (2), we proceed as follow. Denote by $\tilde u = u|_Y$ the restriction of the Lee-eigenfunction $u$ to $Y^{k+1}$, thus it follows that $|\nabla_{h_+}\tilde u| \le \tilde u$ still holds on $Y^{k+1}$. Moreover, following \cite{Perez-AyalaSamuel2025HaCc} (see proof of Theorem 1.6), we have
    \begin{equation}\label{InducedLaplacian}
        \Delta_{h_+}(\ln \tilde u) \ge k - \beta^Y(u) - \alpha>0,
    \end{equation}
    where $\alpha>0$ bounds the norm of the mean curvature of $Y$. Therefore,
    \begin{equation}
        \Div_{h_+}(f^p \nabla_{h_+}(\ln \tilde u)) \ge -\frac{p\epsilon^qf^{q(p - 1)}}{q} - \frac{|\nabla_{h_+}f|^p}{\epsilon^p} + f^p(k - \beta^Y(u) - \alpha)
    \end{equation}
    Using a similar technique, if $f\in C^\infty_0(\Omega)\setminus\{0\}$ is nonnegative, then
    \[
        \frac{\displaystyle\int_\Omega |\nabla_{h_+}f|^p\;dv_{g_+}}{\displaystyle\int_\Omega f^p\;dv_{g_+}} \ge \left(k-\beta^Y(u)-\alpha + (1-p)\epsilon^{\frac{p}{p-1}}\right)\epsilon^p =: w(\epsilon),
    \]
    where $w$ attains a maximum at $\epsilon_m = \left(\frac{k-\beta^Y(u) - \alpha}{p}\right)^{\frac{p-1}{p}}$ with value $\left(\frac{k-\beta^Y(u) - \alpha}{p}\right)^p$. This step finishes the proof.
\end{proof}

The techniques here closely followed those in Cheng--Leung's work \cite{CheungLeung-Fu2001Eefs}, and the later work by Du--Mao \cite{DuMao}. The key observation is that we have a smooth function $\rho$ satisfying $|\nabla_{g_+}\rho|\le 1$ and $\Delta_{g_+}\rho\ge b$, for some constant $b\in \mathbb{R}$,  everywhere on $X^{n+1}$. In our case, this function is $\rho = \ln(u)$, where $u$ is a Lee-eigenfunction satisfying the gradient estimates (\ref{GradientEstimates}), while we use $\tilde \rho = \ln(u|_Y)$ when dealing with the submanifold case. 

Having a smooth function $\rho$ satisfying $|\nabla_{g_+}\rho|\le 1$ and $\Delta_{g_+}\rho\ge b$ allows you to obtain much more general functional inequalities by generalizing what we have observed while proving Proposition \ref{PoincareTypeInequality}. 
In fact, in \cite{LinHezi2025Sgef}, the following inequality has been proven under the assumption of having a smooth function $\rho$ satisfying $|\nabla_{g_+}\rho|\le 1$ and $\Delta_{g_+}\rho\ge b$: if $\Omega$ is a bounded domain, then for any $p>1$ and $f\in C^\infty_0(\Omega)$, one has 
    \[
        \int_{\Omega}|\Delta_{g_+}f|^p\;dv_{g_+} \ge \left(\frac{(p-1)b^2}{p^2}\right)^p\int_\Omega|f|^p\;dv_{g_+};
    \]
and
\[
\int_\Omega |\Delta_{g_+}f|^2\;dv_{g_+}\ge \frac{b^2}{4} \int_\Omega |\nabla_{g_+}f|^2\;dv_{g_+},
\]
see Theorem 3.1 and Theorem 3.2 in \cite{LinHezi2025Sgef}, and also the work of Farkas--Kaj\'anto--Krist\'aly in \cite{FarkasKajantoKristaly2025}. Iterating this inequality yields
\begin{equation}\label{Lin1}
    \int_{\Omega}|\Delta_{g_+}^mf|^p\;dv_{g_+}\ge \left(\frac{(p-1)b^2}{p^2}\right)^{pm}\int_{\Omega}|f|^p\;dv_{g_+}
\end{equation}
for any $m\in \mathbb{N}$. In contrast, for $m\in \mathbb{N}$, applying (\ref{PI-Manifold}) yields
\[
\int_\Omega |\nabla_{g_+}\Delta_{g_+}^mf|^p\;dv_{g_+}\ge \int_\Omega |\nabla_{g_+}|\Delta^m_{g_+}f||^p\;dv_{g_+}\ge \left(\frac{n}{p}\right)^p\int_\Omega |\Delta_{g_+}^mf|^p\;dv_{g_+},
\]
thus
\begin{equation}\label{Lin2}
    \int_\Omega |\nabla_{g_+}\Delta^m_{g_+}f|^p\;dv_{g_+}\ge\left(\frac{n}{p}\right)^p\left(\frac{(p-1)b^2}{p^2}\right)^{pm}\int_\Omega |f|^p\;dv_{g_+}.
\end{equation}
In the submanifold setting, using (\ref{PI-SubManifold}) instead of (\ref{PI-Manifold}), we obtain
\begin{equation}\label{Lin3}
    \int_\Omega |\nabla_{h_+}\Delta^m_{h_+}f|^p\;dv_{h_+}\ge\left(\frac{k - \beta^Y(u) - \alpha}{p}\right)^p\left(\frac{(p-1)b^2}{p^2}\right)^{pm}\int_\Omega |f|^p\;dv_{h_+},
\end{equation}
where $h_+$ is the induced metric on $Y^{k+1}$. 

We are now in position to prove Theorem \ref{Sharp-LowerBounds-Clamped&Buckling}.

\begin{proof}[Proof of Theorem \ref{Sharp-LowerBounds-Clamped&Buckling}, part (1)]
    Recall that, on weakly Poincar\'e-Einstein manifolds with $\Ric_{g_+}\ge -ng_+$, we have a Lee-eigenfunction $u$ satisfying the gradient estimates (\ref{GradientEstimates}); see discussion at the beginning of the current section. Therefore, if we set $\rho = \ln u$, then $|\nabla_{g_+}\rho|\le 1$ and $\Delta_{g_+}\rho\ge n$ everywhere on $X^{n+1}$. This allows us to use (\ref{Lin1}) and (\ref{Lin2})

    We have two cases on the basis of the parity of $l$. If $l=2m$, then applying (\ref{Lin1}) with $p=2$ and $b=n$ yields
    \[
    \int_{\Omega}|\Delta_{g_+}^mf|^2\;dv_{g_+}\ge \left(\frac{n^2}{4}\right)^{2m}\int_{\Omega}|f|^2\;dv_{g_+} = \left(\frac{n}{2}\right)^{2l}\int_\Omega |f|^2\;dv_{g_+},
    \]
    which implies the lower bound. On the other hand, if $l=2m+1$, then applying (\ref{Lin2}) with $p=2$ and $b=n$ gives
    \[
    \int_\Omega |\nabla_{g_+}\Delta^m_{g_+}f|^2\;dv_{g_+} \ge \left(\frac{n}{2}\right)^2\left(\frac{n^2}{4}\right)^{2m} \int_\Omega |f|^2\;dv_{g_+} = \left(\frac{n}{2}\right)^{2l}\int_\Omega |f|^2\;dv_{g_+},
    \]
    as wanted. This shows that $\Gamma^l_1(X^{n+1},g_+)\ge \left(\frac{n}{2}\right)^{2l}$. Since any weakly Poincar\'e-Einstein manifold is AH, by definition, the equality holds owing to Theorem \ref{Sharp-UpperBounds}, estimate (\ref{Sharp-UB-Clamped}).

    The lower bound on the buckling eigenvalue $\Gamma_1(X^{n+1},g_+)$ follows from Theorem 3.2 in \cite{LinHezi2025Sgef}, where we use $\ln u$ as $\rho$. Once again, the equality follows them from Theorem \ref{Sharp-UpperBounds}, estimate (\ref{Sharp-UB-Buckling}).
\end{proof}

\begin{proof}[Proof of Theorem \ref{Sharp-LowerBounds-Clamped&Buckling}, part (2)] Set $\tilde u = u|_{Y}$ and $\tilde \rho = \ln \tilde u$. Since $|\nabla_{h_+}\tilde u|\le \tilde u$ still holds on $Y^{k+1}$, we have $|\nabla_{h_+}\tilde \rho|\le 1$. On the other hand, from (\ref{InducedLaplacian}) we know that $\Delta_{h_+}\tilde \rho \ge b$ with $b=k-\beta^Y(u)-\alpha$. For the clamped eigenvalue problem, the result follows from (\ref{Lin1}), if $l$ is even, and from (\ref{Lin3}) if $l$ is odd. For the buckling eigenvalue, it follows once more from Theorem 3.2 in \cite{LinHezi2025Sgef}.
\end{proof}

\begin{remark}
    In the work of Lin \cite{LinHezi2025Sgef}, the function $\rho$ is taken to be the distance function on the manifold to a fixed point. Because of that, to get the appropriate estimates on $|\nabla \rho|$ and $\Delta \rho$, they assume the manifolds to be simply connected and to have sectional curvatures bounded above, everywhere, by $-1$.
\end{remark}

\bibliographystyle{amsplain}
\bibliography{bibliography.bib}

\end{document}